\newtheorem{theorem}{Theorem}[section]
\newtheorem{lemma}[theorem]{Lemma}
\newtheorem{proposition}[theorem]{Proposition}
\theoremstyle{definition}
\newcommand{\h}{{\text{\rm h}}}
\newcommand{\topol}{{\text{\rm top}}}
\newcommand{\mdim}{{\rm mdim}}
\newcommand{\ord}{{\rm ord}}
\newcommand{\cA}{{\mathcal A}}
\newcommand{\cE}{{\mathcal E}}
\newcommand{\cF}{{\mathcal F}}
\newcommand{\cM}{{\mathcal M}}
\newcommand{\cQ}{{\mathcal Q}}
\newcommand{\cU}{{\mathcal U}}
\newcommand{\cW}{{\mathcal W}}
\newcommand{\Nb}{{\mathbb N}}
\newcommand{\Rb}{{\mathbb R}}
\newcommand{\diam}{{\rm diam}}
\newcommand{\supp}{{\rm supp}}
\newcommand{\IE}{{\rm IE}}
\newcommand{\Widim}{{\rm Widim}}
\begin{document}

\title
[Relative topological entropy and relative mean dimension]
{Relative topological entropy and relative mean dimension of induced factors}

\author[Kairan Liu]{Kairan Liu}
\address{Kairan Liu: College of Mathematics and Statistics, Chongqing University, Chongqing 401331, China}
\email{lkr111@cqu.edu.cn}

\author[Yixiao Qiao]{Yixiao Qiao}
\address{Yixiao Qiao (Corresponding author): School of Mathematics and Statistics, HNP-LAMA, Central South University, Changsha, China}
\email{yxqiao@mail.ustc.edu.cn}

\subjclass[2010]{37B40; 37B99.}
\keywords{Relative topological entropy; Relative mean dimension; Induced factor; Independence.}

\begin{abstract}
We study the relation of relative topological entropy and relative mean dimension between a factor map and its induced factor map for amenable group actions. On the one hand, we prove that a factor map has zero relative topological entropy if and only if so does the induced factor map. On the other hand, we prove that a factor map has positive relative topological entropy if and only if the induced factor map has infinite relative mean dimension. 
\end{abstract}

\maketitle

\section{Introduction}
The study of various complexities and their relations is one of the most central and significant topics in the field of dynamical systems. Among all the topological conjugacy invariants, topological entropy \cite{AKM65} and mean dimension \cite{Gro99,LW00} attract remarkable attention. Intuitively speaking, topological entropy and mean dimension measure how many bits and parameters, respectively, per iterate we need to describe an orbit in a dynamical system. Although properties for topological entropy may be not preserved by mean dimension, it is worth mentioning that mean dimension can distinguish infinite topological entropy and infinite dimensional (i.e., with the state space being infinite dimensional) systems, and has been further applied to study some problems in dynamical systems which seem difficult to be touched within the framework of entropy theory, for example, dynamical embedding problems \cite{Gut15,GQT19,GT14,GT20,JQhilbert,Lin99,LT14,LW00}.

The induced system $(\cM(X),\widetilde{T})$ of a dynamical system $(X,T)$ was well investigated by Bauer and Sigmund \cite{BS75}, where $\cM(X)$ denotes the space of all Borel probability measures on the compact metrizable space $X$, being equipped with the weak$^\ast$-topology, and where $\widetilde{T}:\cM(X)\to\cM(X)$ denotes the push-forward map (which is again a homeomorphism) induced by the homeomorphism $T:X\to X$. Bauer and Sigmund \cite{BS75} (in 1975) initially showed that if the topological entropy of a system is positive, then the topological entropy of its induced system has to be infinite. Glasner and Weiss \cite{GW95} (in 1995) further developed their connection, and proved a remarkable result: if a system has zero topological entropy then so does its induced system (note that the converse of this statement is obviously true). Later, Kerr and Li \cite[Theorem 5.10]{KL05} (in 2005) showed that a system is null (i.e., with topological sequence entropy being zero along all the sequences) if and only if so is its induced system. These results were also revisited by Qiao and Zhou \cite{QZ17} (in 2017) where it was proven that for any fixed sequence, a system has zero topological sequence entropy if and only if so does its induced system, which thus strengthens and unifies the above-mentioned results due to Glasner--Weiss \cite{GW95} and Kerr--Li \cite{KL05}. Furthermore, as an application it was shown in Qiao--Zhou \cite{QZ17} that the entropy dimension (introduced originally and developed systematically by Dou, Huang, and Park \cite{DHP11} as a topological conjugacy invariant distinguishing zero entropy systems, for a deep exploration of the measure-theoretical version of entropy dimension see Dou--Huang--Park \cite{DHP19}) of a system is equal to that of its induced system. Recently, Burguet and Shi \cite{BS25} showed that if a system has positive topological entropy then its induced system has infinite mean dimension (note that the converse also holds by reason of \cite{GW95,LW00}, and the statement holds true for amenable group actions \cite{SZ25}).

Relative topological entropy, as a generalization of topological entropy, reflects the complexity of fibers along factor maps. In his famous work, Bowen \cite{Bow73} (in 1973) established an inequality for (relative) topological entropy, which connects a system to its factor with all the fibers along a factor map. More precisely, Bowen \cite{Bow73} proved that the topological entropy of a system is bounded from above by the sum of the topological entropy of its factor and the relative topological entropy along a factor map. Note that Bowen's inequality for topological entropy has the same spirit as what was discovered in (classical) dimension theory (known as the Hurewicz theorem \cite{HW41}). Similarly, one might consider and further expect a reasonable analogue for mean dimension. However, Tsukamoto \cite{Tsu25} proved amazing results for relative mean dimension, which deny such a parallel expectation in the context of mean dimension. Besides, notice that still other phenomena in relation to these invariants are different from each other. For instance, unlike topological entropy, the mean dimension of a dynamical system does not necessarily decrease when taking factors, and moreover, we do not have a general product formula for mean dimension (but with the exception of self-products, for details we refer to \cite{LW00,Tsu19,JQ}).

All those aspects stated above naturally motivate us to study the complexity of factor maps from a mixed perspective. In the present paper, relative topological entropy, relative mean dimension, induced systems and induced factor maps come together, being investigated for all the actions of amenable groups. Our main aim is to prove the following relation among them:

\begin{theorem}\label{thm:main}
Let $G$ be a countably inﬁnite amenable group, $\pi:(X,G)\to(Y,G)$ a factor map between two $G$-systems, and $\widetilde{\pi}:(\cM(X),G)\to(\cM(Y),G)$ the induced factor map. Then we have:
\begin{itemize}
\item[(i)]
$\pi$ has zero relative topological entropy if and only if so does $\widetilde{\pi}$.
\item[(ii)]
$\pi$ has positive relative topological entropy if and only if $\widetilde{\pi}$ has infinite relative mean dimension.
\end{itemize}
\end{theorem}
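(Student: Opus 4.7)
My plan is to base the proof on the relative IE-tuple theory for actions of countable amenable groups, which provides a combinatorial characterization of positive relative topological entropy for $\pi$ in terms of positive independence density over Følner sets among tuples lying in a common fiber of $\pi$. The key bridge between $(X,G)$ and $(\cM(X),G)$ is the continuous $G$-equivariant barycentric embedding $\iota:X\hookrightarrow\cM(X)$, $x\mapsto\delta_x$, which intertwines $\pi$ and $\widetilde{\pi}$ via $\widetilde{\pi}\circ\iota=\iota_Y\circ\pi$, together with the affine structure of the fibers of $\widetilde{\pi}$.

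For part~(i), the ``if'' direction is immediate: the restriction of $\widetilde{\pi}$ to the closed $G$-invariant set $\iota(X)\subseteq\cM(X)$ is conjugate to $\pi$, so zero relative topological entropy is inherited. For the ``only if'' direction I argue by contraposition. A non-diagonal relative IE-pair $(\mu_1,\mu_2)$ for $\widetilde{\pi}$ with $\widetilde{\pi}(\mu_1)=\widetilde{\pi}(\mu_2)$ furnishes, via Urysohn-type separation, $f\in C(X)$ and real numbers $a<b$ with $\int f\,d\mu_1<a<b<\int f\,d\mu_2$, and hence the open neighborhoods $U_1=\{\nu:\int f\,d\nu<a\}$ and $U_2=\{\nu:\int f\,d\nu>b\}$ of $\mu_1,\mu_2$ form a relatively independent pair for $\widetilde{\pi}$. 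For every Følner set $F\subseteq G$ this yields a subset $I\subseteq F$ of density at least some $\delta>0$ and a family $\{\nu_\omega\}_{\omega\in\{1,2\}^I}$ of measures in a single fiber of $\widetilde{\pi}$ realizing all patterns, i.e.\ $\int f(gx)\,d\nu_\omega(x)<a$ or $>b$ according as $\omega(g)=1$ or $2$. Treating each $\nu_\omega$ as a distribution on $X$ and invoking a Fubini-style averaging argument extracts a point of $X$ realizing the pattern $\omega$ on a positive fraction of $I$; a pigeonhole over $\omega$ then upgrades this to a non-diagonal relative IE-pair for $\pi$, contradicting zero relative topological entropy of $\pi$.

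For the forward direction of part~(ii), positive relative topological entropy of $\pi$ produces, for every $N\in\Nb$, pairwise distinct points $x_1,\dots,x_N\in X$ with common $\pi$-image forming a non-diagonal relative IE-tuple. The affine embedding $\varphi\colon\Delta_{N-1}\to\cM(X)$, $p\mapsto\sum_i p_i\delta_{x_i}$, has image in a single fiber of $\widetilde{\pi}$. The IE-tuple property, applied to a Følner set $F$ with independence density $\delta>0$, produces within $\cM(X)$ a $G$-translated grid of copies of $\Delta_{N-1}^{\lfloor\delta|F|\rfloor}$ with definite separation in the Følner $F$-metric and common projection to $\cM(Y)^F$. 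A relativized version of the Lindenstrauss--Weiss widim estimates then lower-bounds the relative $\varepsilon$-widim of $\cM(X)$ over $\cM(Y)$ along $F$ by $(N-1)\delta|F|$ for suitably small $\varepsilon$, giving relative mean dimension at least $(N-1)\delta$. Since $N$ is arbitrary, the relative mean dimension is infinite.

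For the backward direction of part~(ii) I proceed by contraposition combined with part~(i): if $\pi$ has zero relative topological entropy then so does $\widetilde{\pi}$, and it remains to deduce that $\widetilde{\pi}$ has finite (indeed zero) relative mean dimension. This is the most delicate step and the main obstacle. The plan is a relative adaptation of the Shi--Zhao argument \cite{SZ25}: approximate $\cM(X)$ at scale $\varepsilon$ by convex hulls of finite subsets of $\iota(X)$, reduce the relative widim of $\cM(X)$ over $\cM(Y)$ to widim estimates for these finite-dimensional approximations, and then exploit zero relative topological entropy of $\widetilde{\pi}$ via the variational principle and the affine structure of $\cM(X)$ to force the widim growth along Følner sets to be sublinear in $|F|$. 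Carefully balancing the approximation scale against the available relative entropy bounds is the crux of this step.
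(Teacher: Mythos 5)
Your high-level decomposition is correct: the easy directions come from the barycentric embedding $x\mapsto\delta_x$ together with the relative version of ``finite entropy implies zero mean dimension'' (the paper's Theorem~\ref{thm:finiteentropy}), so everything reduces to $\h_\topol(\pi,G)=0\Rightarrow\h_\topol(\widetilde{\pi},G)=0$ and $\h_\topol(\pi,G)>0\Rightarrow\mdim(\widetilde{\pi},G)=+\infty$. However, in both of these implications the combinatorial core is either missing or rests on an unsupported claim.

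In the first implication, the step from an independence set for $(\cU_1,\cU_2)$ in $\cM(X)$ down to an independence set for a pair of subsets of $X$ is precisely where the difficulty lies, and ``Fubini-style averaging plus pigeonhole over $\omega$'' does not address it. A measure $\nu_\omega$ in a fiber of $\widetilde{\pi}$ is a distribution on $X$, but samples from different $\nu_\omega$ land over different points of $Y$; you need all the witnessing points $x\in X$ to lie in a \emph{single} fiber of $\pi$. The paper resolves this by first replacing the $\nu_\omega$ with uniform atomic measures $\lambda_\sigma=\frac1L\sum_i\delta_{x_i^\sigma}$ all pushing forward to the same $\nu=\frac1L\sum_i\delta_{y_i}$ on $Y$, with $\pi(x_i^\sigma)=y_i$ (Lemma~\ref{R_pi-dense}), and then pigeonholes over the atom index $i\in[L]$ --- not over $\omega$ --- to fix a single $y_{i_E}$. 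Your Urysohn separation $\int f\,d\mu_1<a<b<\int f\,d\mu_2$ is also strictly weaker than what the argument needs: the paper's Lemma~\ref{diff} produces sets $A_1,A_2$ with $\mu_1(A_1)+\mu_2(A_2)>1$, and it is exactly this $>1$ threshold (together with restricting to balanced colorings $S_E$) that yields the $\tau>1/2$ hypothesis required to run the Karpovsky--Milman/Sauer--Shelah-type extraction (Lemma~\ref{ind}). Your sketch has no analogue of that combinatorial step.

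In the forward direction of (ii), you assert that positive relative topological entropy produces, for every $N$, a non-diagonal relative IE-$N$-tuple with a common $\pi$-image. The cited characterization (Theorem~\ref{thm:positiveentropy}) only gives a non-diagonal IE-\emph{pair}, i.e.\ two disjoint closed sets $V_1,V_2$ with positive independence density relative to $\pi$; passing to $N$-tuples is not automatic, and even if it were, you would need the independence density $\delta(N)$ to decay slower than $1/N$ to make $(N-1)\delta(N)$ blow up --- something your sketch does not address. The paper sidesteps the issue entirely: it keeps the single pair $(V_1,V_2)$ and builds a large alphabet $[2]^H$ by grouping the independence set into $m_n\gtrsim r^3|G_n|/H^2$ disjoint translated blocks $E_{n,i}$ of size $H$, then embeds $\Delta_{[2]^H}^{m_n}$ affinely into a fiber of $\widetilde{\pi}$ and applies Burguet--Shi's Lebesgue lemma (Lemma~\ref{lem:lebesgue}), not a Lindenstrauss--Weiss widim estimate. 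The crucial quantitative point is $(2^H-1)\cdot r^3/H^2\to\infty$ --- the alphabet grows exponentially while the density loss is only polynomial in $H$ --- and this block construction is absent from your outline. Finally, your backward direction of (ii) overestimates the difficulty: once you have (i), all you need is the relative Lindenstrauss--Weiss statement that finite relative entropy forces zero relative mean dimension, whose proof is essentially identical to the absolute case; no Shi--Zhang-style approximation of $\cM(X)$ by finite convex hulls is required.
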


The key ingredient of this paper is the local entropy theory, typically with the tool of independence sets \cite{HY06,KL07} being generally employed in our main proof. But a novelty here is that we develop a new approach (specifically provided in Section 3) to the goal with the help of combinatorial independence, and in particular, our method strengthens the results of Glasner--Weiss \cite{GW95}, Burguet--Shi \cite{BS25} and Shi--Zhang \cite{SZ25}.

This paper is organized as follows. In Section 2, we recall necessary notions and notations, and prepare some elementary properties for relative mean dimension. In Section 3 and Section 4, we prove Theorem \ref{thm:main}.

\subsection*{Acknowledgements}
The first-named author was supported by NNSF of China (Grant No. 12301224) and China Postdoctoral Science Foundation (Grant No. 2022M710527). The authors were supported by NNSF of China (Grant No. 12371190).

\section{Preliminaries}
In this section, we introduce basic notions and prepare some results which will be used in the sequel. Denote by $\mathbb{N}$ and $\mathbb{R}$ the set of positive integers and the set of real numbers, respectively. For $n\in\mathbb{N}$ we write $[n]$ for the set $\{1,2,\dots,n\}$ and $|F|$ for the cardinality of a set $F$. For every real number $r$, we denote by $\lfloor r \rfloor$ the greatest integer not exceeding $r$.

\subsection{$G$-systems and amenable groups}
Let $G$ be a countable (discrete) group. By a \textbf{$G$-system} $(X,G)$ we mean a compact metrizable space $X$ with the group $G$ acting on $X$ by homeomorphisms, that is, there is a continuous map $\Gamma:G\times X\to X$ satisfying that $\Gamma(e_G,x)=x$ and $\Gamma(g,\Gamma(h,x))=\Gamma(gh,x)$ for all $g,h\in G$ and $x\in X$. For simplicity $\Gamma(g,x)$ is usually written as $gx$.

We say that a countable group $G$ is \textbf{amenable} if there exists a \textbf{F{\o}lner sequence} of $G$, namely, a sequence $\{G_n\}_{n=1}^\infty\subset\mathcal{F}(G)$ such that $\lim_{n\to\infty}\frac{|(G_n\setminus gG_n)\cup(gG_n\setminus G_n)|}{|G_n|}=0$ for any $g\in G$, where $\mathcal{F}(G)$ denotes the collection of nonempty finite subsets of $G$. Throughout this paper $G$ is always assumed to be a countably infinite amenable group.

\subsection{Induced systems and induced factors}
Let $X$ be a compact metrizable space, and $\cM(X)$ the space of Borel probability measures on $X$ endowed with the weak$^\ast$-topology. It was classically proven that $\cM(X)$ also comes to be a compact metrizable space \cite[Theorem 6.4]{Par67}. Let $d$ be a compatible metric on $X$. The $1$-Wasserstein distance on $\cM(X)$ induced by $d$, defined by
\begin{equation}\label{Wass-distance-1}
W_d(\mu,\nu)=\inf_{\gamma\in\Gamma(\mu,\nu)}\int d(x,y)d\gamma(x,y),\;\;\;(\mu,\nu\in\cM(X))
\end{equation}
is compatible with the weak$^\ast$-topology, where $\Gamma(\mu,\nu)$ denotes the collection of measures on $X\times X$ with marginals $\mu$ and $\nu$ on the first and second factors respectively. For convenience, when the choice of metric $d$ is clear, we suppress $d$ and write $W(\mu,\nu)$. The Kantorovich-Rubinstein dual representation of $W$ is given by
$$W(\mu,\nu)=\sup_{f}\left|\int fd\mu-\int fd\nu\right|,\;\;\;(\mu,\nu\in\cM(X)),$$
where $f:X\to\mathbb{R}$ runs over all $1$-Lipschitz functions. For every metric space $(Y,\rho)$ let $\diam(Y,\rho)=\sup_{x,y\in Y}\rho(x,y)$ be the diameter of $Y$ with respect to the metric $\rho$. By \eqref{Wass-distance-1} one has
\begin{equation}\label{diam-W}\diam(\cM(X),W)\le\diam(X,d).\end{equation}

A $G$-system $(X,G)$ naturally induces a $G$-system $(\cM(X),G)$, which is defined by $g\mu=\mu\circ g^{-1}$ for all $\mu\in\cM(X)$ and $g\in G$. We call $(\cM(X),G)$ the \textbf{induced system} of $(X,G)$. Let $(X,G)$ and $(Y,G)$ be two $G$-systems. We say that $\pi:(X,G)\to(Y,G)$ is a \textbf{factor map} if $\pi:X\to Y$ is continuous and surjective, and satisfies $\pi(gx)=g\pi(x)$ for all $x\in X$ and $g\in G$. A factor map $\pi:(X,G)\to(Y,G)$ naturally induces a factor map $\widetilde{\pi}:(\cM(X),G)\to(\cM(Y),G)$, called the \textbf{induced factor map}, given by $\widetilde{\pi}:\cM(X)\to\cM(Y)$, $\mu\mapsto\mu\circ\pi^{-1}$.

\subsection{Relative topological entropy and independence}
Let $(X,G)$ be a $G$-system and $d$ a compatible metric on $X$. For any $H\in\cF(G)$ we define a compatible metric $d_H$ on $X$ as follows: $d_H(x,x^\prime)=\max_{s\in H}d(sx,sx^\prime)$, where $x,x^\prime\in X$.

For a closed subset $A$ of $X$, $\varepsilon>0$, and $H\in\cF(G)$, a subset $S\subset A$ is called \textbf{$(A,\varepsilon,d_H)$-separated} if for any distinct $x,y\in S$ one has $d_H(x,y)>\varepsilon$. Denote by $\sharp(A,\varepsilon,d_H)$ the largest cardinality among all the $(A,\varepsilon,d_H)$-separated subsets of $A$. Note that it has to be a finite number by the compactness of $A$.

Let $\pi:(X,G)\to(Y,G)$ be a factor map between two $G$-systems and $\{G_n\}_{n=1}^\infty$ a F{\o}lner sequence of $G$. The \textbf{relative topological entropy} of $\pi$ is defined by $$\h_\topol(\pi,G)=\lim_{\varepsilon\to0}\limsup_{n\to\infty}\frac{\log(\sup_{y\in Y}\sharp(\pi^{-1}(y),\varepsilon,d_{G_n}))}{\vert G_n\vert}.$$ It is independent of the choice of F{\o}lner sequences of $G$ and the compatible metric $d$ on $X$.

For two subsets $U_1$ and $U_2$ of $X$ we say that $I\subset G$ is an \textbf{independence set for $(U_1,U_2)$ with respect to $\pi$} if for every nonempty finite subset $J\subset I$ there exists $y\in Y$ such that for every $\sigma\in[2]^J$ one has $$\pi^{-1}(y)\cap(\cap_{h\in J}h^{-1}U_{\sigma(h)})\ne\emptyset.$$ We denote by $\mathcal{P}^\pi(U_1,U_2)$ the collection of independence sets for $(U_1,U_2)$ with respect to $\pi$. We say that $(U_1,U_2)$ has \textbf{positive independence density with respect to $\pi$} if there is some $c>0$ such that for every $H\in\mathcal{F}(G)$ there exists a subset $I$ of $H$ such that $I\in\mathcal{P}^\pi(U_1,U_2)$ and $\vert I\vert>c\vert H\vert$.

We call $(x_1,x_2)\in X\times X$ an \textbf{$\IE$-pair with respect to $\pi$} if for every neighbourhood $U_1\times U_2$ of $(x_1,x_2)$, $(U_1,U_2)$ has positive independence density with respect to $\pi$. Denote by $\IE(\pi,G)$ the set of all $\IE$-pairs with respect to $\pi$. The following result is a parallel generalization of the classical version. The proof can be found in \cite[Theorem 12.19]{KL16}), it is omitted here.  
\begin{theorem}\label{thm:positiveentropy}
Let $\pi:(X,G)\to(Y,G)$ be a factor map. Then the following statements are equivalent:
\begin{enumerate}
\item [(1)] $\h_\topol(\pi,G)>0$;
\item [(2)] $\IE(\pi,G)\setminus\Delta_2(X)\neq\emptyset$, where $\Delta_2(X)=\{(x,x):x\in X\}$;
\end{enumerate}
\end{theorem}

\subsection{Relative mean dimension}
Let $X$ and $P$ be compact metrizable spaces. Take a metric $d$ on $X$ compatible with its topology. For any $\varepsilon>0$, a continuous map $f:X\to P$ is called an \textbf{$\varepsilon$-embedding} with respect to $d$ if $f(x_1)=f(x_2)$ implies $d(x_1,x_2)<\varepsilon$, for all $x_1,x_2\in X$. Define $\Widim_\varepsilon(X,d)$ to be the smallest integer $n$ such that there is an $n$-dimensional simplicial complex $P$ admitting an $\varepsilon$-embedding $f:X\to P$ with respect to $d$.

Let $\pi:(X,G)\to(Y,G)$ be a factor map between two $G$-systems, $d$ a compatible metric on $X$, and $\{G_n\}_{n=1}^\infty$ a F{\o}lner sequence of $G$. The \textbf{relative mean dimension of $\pi$} is defined by $$\mdim(\pi,G)=\lim_{\varepsilon\to0}\lim_{N\to\infty}\frac{\sup_{y\in Y}\Widim_\varepsilon(\pi^{-1}(y),d_{G_N})}{\vert G_N\vert}.$$ It is easy to see that for every $\varepsilon>0$ and $E,H\in\cF(G)$$$\sup_{y\in Y}\Widim_\varepsilon(\pi^{-1}(y),d_{E\cup H})\le\sup_{y\in Y}\Widim_\varepsilon(\pi^{-1}(y),d_E)+\sup_{y\in Y}\Widim_\varepsilon(\pi^{-1}(y),d_H).$$ So the above limits exist, and the defined value does not depend upon the choice of F{\o}lner sequences of $G$ \cite[Theorem 4.38]{KL16}. It is also independent of the compatible metric $d$ on $X$. When the factor map $\pi:(X,G)\to(Y,G)$ is trivial (i.e., $Y$ is a singleton) this defines the mean dimension of $(X,G)$ (denoted by $\mdim(X,G)$). Clearly, any finite-to-one factor map has zero relative mean dimension.

As follows is an alternative approach to defining relative mean dimension. For any finite open cover $\alpha$ of $X$ define $\ord(\alpha)=\max_{x\in X}\sum_{U\in\alpha}1_U(x)-1$ and $\mathcal{D}(\alpha)=\min_{\beta\succ\alpha}\ord(\beta)$. Here the notation $\beta\succ\alpha$ means that $\beta$ refines $\alpha$. Recall that the topological dimension of $X$ is defined by $\dim(X)=\sup_\alpha\mathcal{D}(\alpha)$, where $\alpha$ runs over all finite open covers of $X$.

For finite open covers $\alpha$ and $\beta$ of $X$, $g\in G$, $H\in\cF(G)$, and a closed subset $A$ of $X$, set $g^{-1}\alpha=\{g^{-1}U:U\in\alpha\}$, $\alpha\vee\beta=\{U\cap V:U\in\alpha,V\in\beta\}$, $\alpha_H=\vee_{g\in H}g^{-1}\alpha$, $\alpha|_A=\{U\cap A:U\in\alpha\}$. We say that a continuous map $f:X\to Y$ is \textbf{$\alpha$-compatible} if there is a finite open cover $\beta$ of $f(X)$ such that $f^{-1}(\beta)\succ\alpha$.
\begin{lemma}[{\cite[Proposition 2.4]{LW00}}]\label{lem:widim-ord}
For a finite open cover $\alpha$ of $X$, $\mathcal{D}(\alpha)\le k$ if and only if there is a polyhedron $P$ with $\dim(P)=k$, which admits an $\alpha$-compatible continuous map $f:X\to P$.
\end{lemma}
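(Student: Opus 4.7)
The plan is to establish both directions via the classical nerve construction from dimension theory, treating the easier implication first.

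For $(\Leftarrow)$, I would start from an $\alpha$-compatible continuous map $f\colon X\to P$ with $\dim(P)=k$, so by hypothesis there is a finite open cover $\beta$ of $f(X)$ with $f^{-1}(\beta)\succ\alpha$. I would then invoke the classical fact that a polyhedron of dimension $k$ has arbitrarily fine open covers of order at most $k$ (take a barycentric subdivision fine enough that the open-star cover refines $\beta$) to obtain a finite open cover $\beta'$ of $f(X)$ with $\beta'\succ\beta$ and $\ord(\beta')\le k$. Pulling back, $f^{-1}(\beta')$ is then a finite open cover of $X$ refining $\alpha$ with $\ord(f^{-1}(\beta'))\le\ord(\beta')\le k$, whence $\mathcal{D}(\alpha)\le k$.

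For $(\Rightarrow)$, assuming $\mathcal{D}(\alpha)\le k$, I would pick $\beta\succ\alpha$ with $\ord(\beta)\le k$ and form the nerve $N(\beta)$, whose vertices are the members of $\beta$ and whose simplices are finite subcollections with nonempty common intersection. Because $\ord(\beta)\le k$ forces no $k+2$ elements of $\beta$ to share a common point, $\dim(|N(\beta)|)\le k$. I would then choose a partition of unity $\{\phi_U\}_{U\in\beta}$ subordinate to $\beta$ and define the canonical map $f\colon X\to|N(\beta)|$ by $f(x)=\sum_{U\in\beta}\phi_U(x)v_U$, where $v_U$ denotes the vertex labelled by $U$; this is well-defined because $\{U\in\beta:\phi_U(x)>0\}\subset\{U\in\beta:x\in U\}$ spans a simplex of $N(\beta)$. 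To verify $\alpha$-compatibility, I would pull back the open-star cover $\{\mathrm{st}(v_U)\cap f(X)\}_{U\in\beta}$: since $f^{-1}(\mathrm{st}(v_U))=\{x:\phi_U(x)>0\}\subset U$, the preimage cover refines $\beta$ and hence $\alpha$. If strict equality $\dim(P)=k$ is desired (as opposed to $\dim(P)\le k$), I would pad $|N(\beta)|$ by joining with a simplex of the appropriate codimension.

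The only substantive input will be the dimension-theoretic fact invoked in the first direction—that a $k$-dimensional polyhedron admits arbitrarily fine open covers of order at most $k$—and this is where the main obstacle really lies, though it is classical and available from standard references on simplicial approximation. Everything else should be routine once the nerve and its subordinate partition of unity are in place.
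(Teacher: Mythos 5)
Your proof is correct and follows exactly the standard nerve-complex argument that Lindenstrauss--Weiss use for Proposition 2.4 of \cite{LW00}, which this paper cites without reproducing: the canonical map into $|N(\beta)|$ with a subordinate partition of unity for $(\Rightarrow)$, and pulling back a fine cover of bounded order for $(\Leftarrow)$. One small point to tighten in $(\Leftarrow)$: the star cover lives on $P$ while $\beta$ covers only $f(X)$, so either extend the members of $\beta$ to open subsets of $P$, form the cover $\{\widetilde B\}\cup\{P\setminus f(X)\}$ of $P$, take a subdivision whose stars refine that, and intersect with $f(X)$; or, more directly, note that $\dim(f(X))\le\dim(P)=k$ since $f(X)\subset P$, and use that a compact metric space of dimension $\le k$ has arbitrarily fine finite open covers of order $\le k$.
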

\begin{proposition}\label{prop:mdim-widim-ord}
Let $\pi:(X,G)\to(Y,G)$ be a factor map and $\{G_n\}_{n=1}^\infty$ a F{\o}lner sequence of $G$. Then $$\mdim(\pi,G)=\sup_\alpha\lim_{n\to\infty}\frac{\sup_{y\in Y}\mathcal{D}(\alpha_{G_n}|_{\pi^{-1}(y)})}{\vert G_n\vert},$$ where $\alpha$ runs over all finite open covers of $X$.
\end{proposition}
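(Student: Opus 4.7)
The approach is to establish the two inequalities in the claimed identity by applying fiberwise the standard correspondences between $\varepsilon$-embeddings and $\alpha$-compatible maps, mediated by Lemma~\ref{lem:widim-ord}. On one side we control mesh, on the other side we control the Lebesgue number.

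For the inequality $\mdim(\pi,G)\le\sup_\alpha\lim_n(\cdot)$, I would fix $\varepsilon>0$ and choose a finite open cover $\alpha$ of $X$ with $\diam(U,d)<\varepsilon$ for all $U\in\alpha$. A direct check shows that every element of $\alpha_H$ has the form $\bigcap_{g\in H}g^{-1}U_g$ and is carried by any $h\in H$ into $U_h$; hence such an element has $d_H$-diameter less than $\varepsilon$, and the same holds after restricting to $\pi^{-1}(y)$. Lemma~\ref{lem:widim-ord} then produces an $\alpha_{G_n}|_{\pi^{-1}(y)}$-compatible continuous map $f:\pi^{-1}(y)\to P$ with $\dim P=\mathcal{D}(\alpha_{G_n}|_{\pi^{-1}(y)})$, which by the mesh bound is automatically an $\varepsilon$-embedding with respect to $d_{G_n}$. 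Thus $\Widim_\varepsilon(\pi^{-1}(y),d_{G_n})\le\mathcal{D}(\alpha_{G_n}|_{\pi^{-1}(y)})$; taking the supremum over $y\in Y$, dividing by $|G_n|$, and passing to $n\to\infty$ and $\varepsilon\to 0$ yields this inequality.

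For the reverse direction, fix $\alpha$ and let $\delta_L>0$ be a Lebesgue number of $\alpha$ with respect to $d$. I would first verify that $\delta_L$ remains a Lebesgue number of $\alpha_H$ with respect to $d_H$ for every $H\in\cF(G)$: if $x'\in B_{d_H}(x,\delta_L)$ then for each $h\in H$ one has $hx'\in B_d(hx,\delta_L)\subset U_h$ for some $U_h\in\alpha$, whence $x'\in\bigcap_{g\in H}g^{-1}U_g$. Now pick $\varepsilon<\delta_L$ and an $\varepsilon$-embedding $f:\pi^{-1}(y)\to P$ realizing $\dim P=\Widim_\varepsilon(\pi^{-1}(y),d_{G_n})$. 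A standard upper-semicontinuity/compactness argument (arguing by contradiction with convergent subsequences) shows that since $\diam(f^{-1}(p),d_{G_n})<\varepsilon<\delta_L$ for every $p\in f(\pi^{-1}(y))$ and $P$ is compact, there exists $\delta>0$ such that $\diam(f^{-1}(B_\delta(p)),d_{G_n})<\delta_L$ for every such $p$. Passing to a finite subcover $\beta\subset\{B_\delta(p)\}$ of $f(\pi^{-1}(y))$, each $f^{-1}(V)$ with $V\in\beta$ has $d_{G_n}$-diameter less than $\delta_L$ and is therefore contained in some element of $\alpha_{G_n}|_{\pi^{-1}(y)}$ by the Lebesgue property. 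So $f$ is $\alpha_{G_n}|_{\pi^{-1}(y)}$-compatible, and Lemma~\ref{lem:widim-ord} gives $\mathcal{D}(\alpha_{G_n}|_{\pi^{-1}(y)})\le\Widim_\varepsilon(\pi^{-1}(y),d_{G_n})$. Taking supremum over $y$, dividing by $|G_n|$, letting $n\to\infty$, and then taking supremum over $\alpha$ completes the argument.

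Existence of the limit on the right-hand side along any F{\o}lner sequence, and its independence of that sequence, follows as for $\mdim$ itself: the standard subadditivity $\mathcal{D}(\alpha\vee\beta)\le\mathcal{D}(\alpha)+\mathcal{D}(\beta)$ restricts to $\pi^{-1}(y)$, so taking supremum over $y$ gives a subadditive set function on $\cF(G)$, to which the Ornstein–Weiss lemma applies. The main technical hurdle is the compactness step in the reverse inequality, namely the passage from an $\varepsilon$-embedding $f$ to genuine $\alpha$-compatibility through a uniform bound on $\diam(f^{-1}(B_\delta(p)),d_{G_n})$; fortunately the Lebesgue number $\delta_L$ depends only on $\alpha$ and $d$, not on $y$, so no extra uniformity in $y$ is needed beyond taking the final supremum.
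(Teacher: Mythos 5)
Your proof is correct and takes essentially the same route as the paper: both inequalities run through Lemma~\ref{lem:widim-ord}, with a small-mesh cover giving the bound $\Widim_\varepsilon\le\mathcal{D}$ and a Lebesgue number giving $\mathcal{D}\le\Widim_\lambda$. The only cosmetic difference is in verifying $\alpha_{G_n}|_{\pi^{-1}(y)}$-compatibility of the $\varepsilon$-embedding $f$ in the second direction: the paper sets $\widetilde{U}=\{p:f^{-1}(p)\subset U\}$ and uses that $f$ is a closed map (so each $\widetilde{U}$ is open in $f(\pi^{-1}(y))$), whereas you derive a uniform $\delta$ by a compactness/subsequence contradiction and then take a finite subcover by $\delta$-balls --- both arguments are valid.
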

\begin{proof}
First note that by Lemma \ref{lem:widim-ord} $$\sup_{y\in Y}\mathcal{D}(\alpha_{H\cup E}|_{\pi^{-1}(y)})\le\sup_{y\in Y}\mathcal{D}(\alpha_H|_{\pi^{-1}(y)})+\sup_{y\in Y}\mathcal{D}(\alpha_E|_{\pi^{-1}(y)})$$ for every finite open cover $\alpha$ of $X$ and $E,H\in\cF(G)$. So the above limit exists, and is independent of the choice of F{\o}lner sequences of $G$ \cite[Theorem 4.38]{KL16}.

To show $$\sup_\alpha\lim_{n\to\infty}\frac{\sup_{y\in Y}\mathcal{D}(\alpha_{G_n}|_{\pi^{-1}(y)})}{\vert G_n\vert}\le\mdim(\pi,G)$$ we take a finite open cover $\alpha$ of $X$ and a Lebesgue number $\lambda>0$ of $\alpha$ with respect to a compatible metric $d$ on $X$. It suffices to prove that for every $H\in\cF(G)$ and $y\in Y$,
\begin{equation}\label{eq:ordleqwidim}
\mathcal{D}(\alpha_H|_{\pi^{-1}(y)})\le\Widim_\lambda(\pi^{-1}(y),d_H).
\end{equation}
In fact, take a polyhedron $P$ with $\dim(P)=\Widim_\lambda(\pi^{-1}(y),d_H)$ and a $\lambda$-embedding $f:\pi^{-1}(y)\to P$ with respect to $d_H$. Then for every $g\in H$ and $p\in f(X)$, $\diam(g(f^{-1}(p)),d)<\lambda$ and hence $g(f^{-1}(p))$ is contained in some element $U\in\alpha$. Thus for every $p\in f(X)$, $f^{-1}(p)$ is contained in some element in $\alpha_H|_{\pi^{-1}(y)}$. For each $U\in\alpha_H|_{\pi^{-1}(y)}$ put $\widetilde{U}=\{p\in P:f^{-1}(p)\subset U\}$. Clearly, each $\widetilde{U}$ is open in $f(X)$, $f^{-1}(\widetilde{U})\subset U$, and all the sets $\widetilde{U}$ cover $f(X)$. Therefore $f$ is $\alpha_H|_{\pi^{-1}(y)}$-compatible, and by Lemma \ref{lem:widim-ord} we obtain \eqref{eq:ordleqwidim}.

To show $$\mdim(\pi,G)\le\sup_\alpha\lim_{n\to\infty}\frac{\sup_{y\in Y}\mathcal{D}(\alpha_{G_n}|_{\pi^{-1}(y)})}{\vert G_n\vert},$$ we take $\varepsilon>0$, $n\in\mathbb{N}$, $y\in Y$, and a finite open cover $\alpha$ of $X$ satisfying $\diam(U,d)<\varepsilon$ for all $U\in\alpha$. By Lemma \ref{lem:widim-ord} we can find a polyhedron $P$ with $\dim(P)=\mathcal{D}(\alpha_{G_n}|_{\pi^{-1}(y)})$, and a continuous map $f:\pi^{-1}(y)\to P$ which is $\alpha_{G_n}|_{\pi^{-1}(y)}$-compatible. So $\Widim_\varepsilon(\pi^{-1}(y),d_{G_n})\le\dim(P)$, and thus we conclude.
\end{proof}
A relative version of Lindenstrauss--Weiss \cite[Theorem 4.2]{LW00} still holds true, and the proof is omitted as is (almost) the same.
\begin{theorem}[{cf. \cite[Theorem 4.2]{LW00}}]\label{thm:finiteentropy}
Let $\pi:(X,G)\to(Y,G)$ be a factor map between two $G$-systems. If $\pi$ has finite relative topological entropy, then $\pi$ has zero relative mean dimension.
\end{theorem}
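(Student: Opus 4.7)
The plan is to relativise Lindenstrauss--Weiss's proof \cite[Theorem~4.2]{LW00} that finite topological entropy forces zero mean dimension, bridging the two invariants via a \emph{relative metric mean dimension}. For a compatible metric $d$ on $X$, I would set
$$
\overline{\mdim}_M(\pi,G,d)=\limsup_{\varepsilon\to 0}\frac{s(\pi,G,\varepsilon)}{|\log\varepsilon|},\qquad s(\pi,G,\varepsilon)=\limsup_{n\to\infty}\frac{\log\sup_{y\in Y}\sharp(\pi^{-1}(y),\varepsilon,d_{G_n})}{|G_n|}.
$$
Since $s(\pi,G,\varepsilon)\le\h_\topol(\pi,G)$ for every $\varepsilon>0$, the hypothesis $\h_\topol(\pi,G)<\infty$ immediately yields $\overline{\mdim}_M(\pi,G,d)=0$, so it suffices to prove the relative comparison
$$
\mdim(\pi,G)\le\overline{\mdim}_M(\pi,G,d).
$$

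To establish this comparison I would apply the core geometric estimate of \cite{LW00} fiberwise: for any compact metric space $(K,\rho)$ and any $\varepsilon>0$, the quantity $\Widim_\varepsilon(K,\rho)$ admits an upper bound depending only on $\sharp(K,\varepsilon/2,\rho)$ and $\varepsilon$, whose asymptotic behaviour, once divided by $|G_n|$ and with $\varepsilon\to 0$, is controlled by $s(\pi,G,\varepsilon)/|\log\varepsilon|$. Applying this estimate to $K=\pi^{-1}(y)$ equipped with the restriction of $d_{G_n}$, then taking $\sup_{y\in Y}$, dividing by $|G_n|$, letting $n\to\infty$ (valid by \cite[Theorem~4.38]{KL16} together with the subadditivity $\sup_y\Widim_\varepsilon(\pi^{-1}(y),d_{H\cup E})\le\sup_y\Widim_\varepsilon(\pi^{-1}(y),d_H)+\sup_y\Widim_\varepsilon(\pi^{-1}(y),d_E)$ already noted after the definition of $\mdim(\pi,G)$), and finally $\varepsilon\to 0$, produces the required inequality. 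Chaining the two displays above gives $\mdim(\pi,G)=0$.

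The principal concern is purely a bookkeeping matter: Lindenstrauss--Weiss's geometric estimate is stated for a single compact metric space with no reference to a group action or to an ambient dynamical structure, so it applies verbatim to each fiber $\pi^{-1}(y)\subset X$, and the supremum over $y\in Y$ passes through the inequality without loss. The amenable-group bookkeeping is absorbed entirely into the Ornstein--Weiss lemma that guarantees the existence and F{\o}lner-independence of the limits, exactly as in the absolute case. This is precisely what the paper intends by remarking that ``the proof is omitted as is (almost) the same.''
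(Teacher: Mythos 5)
Your proposal is correct and matches the approach the paper intends: relativise Lindenstrauss--Weiss's Theorem 4.2 by introducing a relative metric mean dimension, observe that finite $\h_\topol(\pi,G)$ trivially forces $\overline{\mdim}_M(\pi,G,d)=0$, and obtain $\mdim(\pi,G)\le\overline{\mdim}_M(\pi,G,d)$ by applying the Lindenstrauss--Weiss geometric bound on $\Widim_\varepsilon$ to each fiber $(\pi^{-1}(y),d_{G_n})$ and then taking $\sup_{y\in Y}$, which passes through the estimate by monotonicity. This is precisely the sense in which the paper's proof is ``(almost) the same'' as the absolute case.
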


Notice that since $X$ and $Y$ can be viewed as subsystems of $\cM(X)$ and $\cM(Y)$, respectively, $\pi$ is the corresponding restriction of $\widetilde{\pi}$, the statements of Theorem \ref{thm:main} can be reduced to $h_{\topol}(\pi,G)=0\implies h_{\topol}(\widetilde{\pi},G)=0$ and $h_{\topol}(X,G)>0\iff\mdim(\widetilde{\pi},G)=\infty$. Moreover, by Theorem \ref{thm:finiteentropy} above, Theorem \ref{thm:main} can be further reduced to the derivations $\h_\topol(\pi,G)=0\implies\h_\topol(\widetilde{\pi},G)=0$ and $\h_\topol(\pi,G)>0\implies\mdim(\widetilde{\pi},G)=+\infty$.

\subsection{Simplices and the Lebesgue lemma}
Let $n\in\mathbb{N}$. An $(n-1)$-dimensional simplex is represented as $\Delta_n=\{(x_i)_{i\in[n]}\in[0,1]^n:\sum_{i\in[n]}x_i=1\}$. The topological dimension of $\Delta_n$ is still equal to $(n-1)$. An $l$-face of $\Delta_n$ is an $(l-1)$-dimensional simplex $\{(x_i)_{i\in I}\in[0,1]^I:\sum_{i\in I}x_i=1\}$ for some $I\subset[n]$ with $|I|=l$. For a face $F=\{(x_i)_{i\in I}\in[0,1]^I:\sum_{i\in I}x_i=1\}$ of $\Delta_n$ define its opposite face as $$\overline{F}=\{(x_i)_{i\in[n]\setminus I}\in[0,1]^{[n]\setminus I}:\sum_{i\in[n]\setminus I}x_i=1\}.$$ For every $k\in\mathbb{N}$ denote $\Delta_n^k=\Delta_n\times\cdots\times\Delta_n$ ($k$-times). For each face $F$ of $\Delta_n$ and $i\in[k]$ set $$F_i=\underbrace{\Delta_n\times\cdots\times\Delta_n}_{i-1}\times F\times\underbrace{\Delta_n\times\cdots\times\Delta_n}_{k-i}\subset\Delta_n^k.$$

The celebrated Lebesgue lemma is a key tool when establishing the theory of topological dimension of Euclidean spaces, which states that any finite open cover $\alpha$ of $[0,1]^n$, in which no elements meet two opposite faces of $[0,1]^n$, has $\ord(\alpha)\ge n$. For our purpose we borrow a tailored Lebesgue lemma verified by Burguet--Shi \cite{BS25}.
\begin{lemma}[{\cite[Lemma 9]{BS25}}]\label{lem:lebesgue}
Let $k,n\in\mathbb{N}$. If a finite open cover $\alpha$ of $\Delta_n^k$ satisfies the following condition:
\begin{itemize}\item
for any $i\in[k]$, any $m\in\mathbb{N}$, any $U_1,U_2,\dots,U_m\in\alpha$, and $(n-1)$-faces $F^1,F^2,\dots,F^m$ of $\Delta_n$ with $U_j\cap(F^j)_i\ne\emptyset$ for all $j\in[m]$, it holds that $(\cap_{j\in[m]}U_j)\cap(\overline{\cap_{j\in[m]}F^j})_i=\emptyset$,
\end{itemize}
then $\ord(\alpha)\ge nk$.
\end{lemma}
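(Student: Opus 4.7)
My plan is to prove this Lebesgue-type lemma by contradiction, adapting the classical degree-theoretic proof of the Lebesgue covering lemma for $[0,1]^n$ to the product-of-simplices setting $\Delta_n^k$. Assume $\ord(\alpha)<nk$. For each $U\in\alpha$ and each coordinate $i\in[k]$, set $T_i(U)=\{l\in[n]:U\cap(F^{(l)})_i=\emptyset\}$, where $F^{(l)}$ denotes the $(n-1)$-face of $\Delta_n$ with index set $[n]\setminus\{l\}$, so that $\overline{F^{(l)}}=\{e_l\}$. Applying the hypothesis with $m=n$ and all $n$ distinct $(n-1)$-faces shows that $T_i(U)\neq\emptyset$; otherwise $U$ itself would be forced to be empty.

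I would then choose selectors $\lambda_i(U)\in T_i(U)$ for every pair $(U,i)$, assemble vertices $P_U=(e_{\lambda_1(U)},\dots,e_{\lambda_k(U)})\in\Delta_n^k$, and, with a continuous partition of unity $\{\phi_U\}_{U\in\alpha}$ subordinate to $\alpha$, define $\Phi:\Delta_n^k\to\Delta_n^k$ by $\Phi(x)=\sum_{U\in\alpha}\phi_U(x)P_U$. The $m=1$ case of the hypothesis guarantees that $\Phi$ preserves each facet coordinate-wise: if $x^{(i)}_l=0$, then every $U\ni x$ meets $(F^{(l)})_i$, so $\lambda_i(U)\neq l$, and hence $\Phi(x)^{(i)}_l=0$. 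Convexity of $\Delta_n^k$ then makes the straight-line homotopy $(1-t)\,\id+t\,\Phi$ a face-preserving homotopy between $\Phi$ and the identity, so $\Phi$ has mapping degree $\pm 1$ relative to the boundary $\partial\Delta_n^k$ and is therefore surjective onto $\Delta_n^k$.

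The main obstacle is to force a contradiction from the assumption $\ord(\alpha)<nk$. For a generic interior point $p\in\Delta_n^k$ one picks $x$ with $\Phi(x)=p$; positivity of every coordinate $p^{(i)}_l$ then forces that for each pair $(i,l)\in[k]\times[n]$ some $U\ni x$ has $\lambda_i(U)=l$. By itself this yields only $\ord(\alpha)\geq n-1$, so the $m\geq 2$ hypothesis has to be crucially exploited: the iterated multi-intersection condition severely restricts which profiles $(\lambda_1(U),\dots,\lambda_k(U))\in[n]^k$ can simultaneously appear among the open sets containing $x$, and packaging this rigidity into the sharp bound $|\{U\in\alpha:U\ni x\}|>nk$ is where I expect the heart of the combinatorial argument to lie.
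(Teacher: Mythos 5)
Your degree-theoretic setup is sound: $T_i(U)\ne\emptyset$ follows from the $m=n$ case of the hypothesis applied with all $n$ facets, the partition-of-unity map $\Phi$ is face-preserving, and the straight-line homotopy to the identity rel $\partial\Delta_n^k$ gives surjectivity. But you stop precisely at the step that carries the conclusion, and the route you propose — counting $|\{U\in\alpha:U\ni x\}|$ at a single preimage $x$ of a generic point and hoping that the $m\ge2$ clause forces the profiles $(\lambda_1(U),\dots,\lambda_k(U))$ apart — is not how the argument closes. A single preimage only yields $|\alpha_x|\ge n$ (surjectivity of one $\lambda_i$ on $\alpha_x$), and I do not see how the multi-intersection hypothesis boosts that pointwise count to $nk+1$.

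The observation that actually finishes the proof is a dimension bound on $\mathrm{im}(\Phi)$, not a pointwise count. Since $\Phi(x)=\sum_U\phi_U(x)P_U$ is a convex combination of $\{P_U:\phi_U(x)>0\}$, the image of $\Phi$ is contained in the finite union of convex hulls $\mathrm{conv}\{P_U:U\in S\}$ over $S\subset\alpha$ with $\bigcap_{U\in S}U\ne\emptyset$, and each such hull has dimension at most $|S|-1\le\ord(\alpha)$; equivalently, $\Phi$ factors through the $\ord(\alpha)$-dimensional nerve of $\alpha$. Surjectivity then forces $\ord(\alpha)\ge\dim\Delta_n^k=(n-1)k$.

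Note, however, that $(n-1)k$ is all this method can yield, and the displayed bound $\ord(\alpha)\ge nk$ appears to be too strong under this paper's convention that $\Delta_n$ has $n$ vertices (so $\dim\Delta_n^k=(n-1)k$). For $n=2$, $k=1$, the cover $\{[0,0.4),(0.3,0.7),(0.6,1]\}$ of $\Delta_2\cong[0,1]$ satisfies every instance of the hypothesis — the only non-vacuous cases involve $[0,0.4)$ and $(0.6,1]$, which miss the opposite vertex and are mutually disjoint — yet has $\ord=1<2=nk$. The paper's Claim 3 in Section 4 in fact invokes only the bound $(2^H-1)m_n$, which is $(n-1)k$ with $n=2^H$, $k=m_n$; the $nk$ in the statement is evidently a simplex-indexing mismatch with [BS25]. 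With the target corrected to $(n-1)k$ and the convex-hull/nerve dimension step supplied, your sketch becomes a complete proof of what the application actually needs.
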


\section{Proof of $\h_\topol(\pi,G)=0\implies\h_\topol(\widetilde{\pi},G)=0$}
Let $\pi:X\to Y$ be a continuous surjective map between two compact metrizable spaces, and $\widetilde{\pi}:\cM(X)\to\cM(Y)$ the map induced by $\pi$ (being defined the same as the induced factor map, but note that we do not have any group actions for the moment). Let $H$ be a finite index set, and we set
$$R^H_{\widetilde{\pi}}=\big\{(\mu_\sigma)_{\sigma\in H}\in\prod_{\sigma\in H}\cM(X):\widetilde{\pi}(\mu_s)=\widetilde{\pi}(\mu_\omega)\ \text{for every}\ s,\omega\in H\big\}.$$ For every $n\in\mathbb{N}$ set $\cM_n(X)=\{\frac1n\sum_{i=1}^n\delta_{x_i}:x_i\in X\ \text{for all}\ i\in[n]\}$ and $$R^H_{\widetilde{\pi}_n}=\big\{(\mu_\sigma)_{\sigma\in H}\in\prod_{\sigma\in H}\cM_n(X):\widetilde{\pi}_n(\mu_s)=\widetilde{\pi}_n(\mu_\omega)\ \text{for every}\ s,\omega\in H\big\}$$ where $\widetilde{\pi}_n:\cM_n(X)\to\cM_n(Y)$ denotes the restriction of $\widetilde{\pi}$ to $\cM_n(X)$.
We shall borrow the following result. The case $\vert H\vert=2$ was proved by Liu--Wei (\cite[Lemma 4.1]{LW24}). For general $H$, the same argument applies, so the proof is omitted. 
\begin{lemma}\label{R_pi-dense} For every finite index set $H$,
$R_{\widetilde{\pi}}^H=\overline{\cup_{n\in\mathbb{N}}R^H_{\widetilde{\pi}_n}}$.
\end{lemma}

Let $E$ be a finite set and $F\subset E$. For every $\sigma\in [2]^E$, let
$\sigma\vert_F$ denote the restriction of $\sigma$ on $F$, that is, $\sigma\vert_F\in[2]^F$ and $\sigma\vert_F(s)=\sigma(s)$ for every $s\in F$. For every $S\subset [2]^E$ we denote by $S\vert_F$ the set $\{\sigma\vert_I\colon \sigma\in S\}$. We will need the following Sauer--Perles--Shelah lemma \cite{Sau72,She72}, the reader may refer to \cite[Lemma 12.14]{KL16} for the proof.
\begin{lemma}\label{KLind}
	Let $\lambda>1$, there exists a constant $c>0$ such that, for all $n\in\Nb$, if $S\subset[2]^{[n]}$ satisfies $\vert S\vert\geq \lambda^n$ then there is an $I\subset [n]$ with $\vert I\vert\geq cn$ and $S\vert_I=[k]^I$.
\end{lemma}

Let $E$ be a finite set such that $\vert E\vert$ is even. For every $\sigma\in[2]^E$ and $i\in[2]$ let $$E_i^\sigma=\{j\in E:\sigma(j)=i\},\;\;\;S_E=\{\sigma\in[2]^E:\vert E_1^\sigma\vert=\vert E_2^\sigma\vert\}.$$ We need to employ the following lemma (see \cite[Lemma 3.14]{LL25}). For completeness we include a proof.
\begin{lemma}\label{ind}
Let $d>0$ and $\tau>1/2$. Then there exists $c>0$, depending on $d$ and $\tau$, such that the following holds: for any finite set $E$ with $\vert E\vert$ even and large enough, and any $S_E^\prime\subset S_E$ with $\vert S_E^\prime\vert>d\cdot\vert S_E\vert$, if for each $\sigma\in S_E^\prime$ we take a subset $E_\sigma\subset E$ with $\vert E_\sigma\vert>\tau\cdot\vert E\vert$, then there exists $I\subset E$ with $\vert I\vert>c\cdot\vert E\vert$ such that for every $\omega\in[2]^I$ there is $\sigma\in S_E^\prime$ satisfying $\sigma\vert_I=\omega$ and $I\subset E_\sigma$.
\end{lemma}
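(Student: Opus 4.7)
The approach I would take is a two-stage pigeonhole plus Sauer--Shelah argument. The key idea, making essential use of $\tau > 1/2$, is to first locate a single modestly sized set $F \subset E$ that sits inside $E_\sigma$ for exponentially many $\sigma \in S_E^\prime$. Restricting these $\sigma$'s to $F$ then yields a family in $[2]^F$ whose cardinality is exponential in $|F|$, so the Sauer--Shelah lemma produces a shattered subset $I \subset F$ of linear size. Since each $\sigma$ realizing the shattering is chosen from among those with $F \subset E_\sigma$, the inclusion $I \subset F \subset E_\sigma$ is automatic, and $\sigma\vert_I = \omega$ for every $\omega \in [2]^I$ by the definition of shattering.

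For stage one, I would fix any $\lambda \in (0,\tau-\tfrac{1}{2})$, set $k := \lfloor\lambda|E|\rfloor$, and define $S_E^\prime(F) := \{\sigma \in S_E^\prime : F \subset E_\sigma\}$ for each $F \subset E$ of size $k$. Double counting pairs $(\sigma,F)$ with $F \subset E_\sigma$ gives
$$\sum_{|F|=k}|S_E^\prime(F)| = \sum_{\sigma\in S_E^\prime}\binom{|E_\sigma|}{k} \geq |S_E^\prime|\binom{\lceil\tau|E|\rceil}{k}.$$
Dividing by $\binom{|E|}{k}$ and using the elementary bound $\binom{\tau n}{k}/\binom{n}{k}\geq (\tau - k/n)^k$ (which follows since $(\tau n-i)/(n-i)$ is decreasing in $i$), pigeonhole hands me a specific $F \subset E$ of size $k$ with $|S_E^\prime(F)| \geq (\tau-\lambda)^{\lambda|E|}|S_E^\prime|$.

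For stage two, I would project $S_E^\prime(F)$ into $[2]^F$ via $\sigma \mapsto \sigma\vert_F$. Each fibre of this map has size at most $2^{|E|-|F|}$, hence the image $\cG \subset [2]^F$ satisfies
$$|\cG| \geq |S_E^\prime(F)|/2^{|E|-|F|} \geq (2(\tau-\lambda))^{\lambda|E|}|S_E^\prime|/2^{|E|}.$$
Combining this with $|S_E^\prime| > d\binom{|E|}{|E|/2} \geq \kappa d\cdot 2^{|E|}/\sqrt{|E|}$ (Stirling, with $\kappa>0$ absolute), and setting $\eta := 2(\tau-\lambda) > 1$, one obtains $|\cG| \geq \kappa d\cdot\eta^{\lambda|E|}/\sqrt{|E|}$, which exceeds $(1+\delta)^{|F|}$ for any fixed $\delta \in (0,\eta-1)$ once $|E|$ is large enough (depending on $d,\tau$). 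The Sauer--Shelah bound $\sum_{i\leq c_0|F|}\binom{|F|}{i} \leq 2^{h(c_0)|F|}$, with $h(p) = -p\log_2 p - (1-p)\log_2(1-p)$ and $c_0$ chosen so that $h(c_0)<\log_2(1+\delta)$, then yields a subset $I \subset F$ shattered by $\cG$ with $|I| \geq c_0|F| = c_0\lambda|E|$; taking $c := c_0\lambda$ completes the argument.

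The main obstacle I anticipate is threading the inequalities so that the polynomial loss $1/\sqrt{|E|}$ coming from $\binom{|E|}{|E|/2}\sim 2^{|E|}/\sqrt{|E|}$ is absorbed by the exponential gain $\eta^{\lambda|E|}$; this is exactly where the hypothesis $\tau > 1/2$ is essential, since it permits choosing $\lambda > 0$ so small that $\eta = 2(\tau-\lambda)$ remains strictly greater than $1$. Once this is arranged, the remaining bookkeeping simply amounts to fixing $\delta\in(0,\eta-1)$ and reading off $c_0$ from the binary entropy function; the final constant $c = c_0\lambda$ ends up depending only on $\tau$, while $d$ affects only the threshold beyond which $|E|$ must be taken.
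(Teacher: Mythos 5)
Your proposal is correct and follows essentially the same two-stage pigeonhole-plus-Sauer–Shelah strategy as the paper: double-count to locate a linear-size set $W_0$ (your $F$) contained in many $E_\sigma$, restrict the surviving $\sigma$'s to $W_0$, verify via $\tau>1/2$ that the restricted family is exponentially large in $|W_0|$, and then invoke a Sauer–Shelah-type lemma (the paper cites the Karpovsky–Milman generalization) to extract a shattered $I\subset W_0$. The only cosmetic differences are that you bound the binomial ratio by $(\tau-k/n)^k$ directly rather than running the Stirling computation with $f(x)=-x\log x$, and that you bound the projection to $[2]^F$ via the fiber-size argument rather than pigeonholing on $[2]^{E\setminus W_0}$; these are equivalent.
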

\begin{proof}
Take $0<\delta<1/4$. Put $N=\vert E\vert$, $\cW=\{W\subset E:\vert W\vert=\lfloor\delta N\rfloor\}$, and $\cA=\{(\sigma,W)\in S_E^\prime\times\cW:W\subset E_\sigma\}$. For each $\sigma^\prime\in S_E^\prime$ one has $\vert\cA_{\sigma^\prime}\vert\ge{\tau N\choose\lfloor\delta N\rfloor}$, where $\cA_{\sigma^\prime}=\{(\sigma^\prime,W):(\sigma^\prime,W)\in\cA\}$. For every $W\in\cW$, we put $\cA_W=\{(\sigma,W):(\sigma,W)\in\cA\}$. Then $$\vert\cA\vert=\sum_{\sigma\in S_E^\prime}\vert\cA_\sigma\vert=\sum_{W\in\cW}\vert\cA_W\vert.$$ Hence there is some $W_0\in\cW$ such that
\begin{equation}\label{chp3-e-1}
\vert\cA_{W_0}\vert\ge\frac{\vert S_E^\prime\vert\cdot{\tau N\choose\lfloor\delta N\rfloor}}{\vert\cW\vert}=\frac{\vert S_E^\prime\vert\cdot{\tau N\choose\lfloor\delta N\rfloor}}{{N\choose\lfloor\delta N\rfloor}}.
\end{equation}
Thus, there exists $\sigma_0\in[2]^{E\setminus W_0}$ such that
\begin{equation}\label{chp3-e-2}
\vert\{(\sigma,W_0)\in\cA_{W_0}:\sigma\vert_{E\setminus W_0}=\sigma_0\}\vert\ge\frac{\vert\cA_{W_0}\vert}{2^{\vert E\setminus W_0\vert}}.
\end{equation}
Let $S_0=\{\sigma\vert_{W_0}:(\sigma,W_0)\in\cA_{W_0},\sigma\vert_{E\setminus W_0}=\sigma_0\}$. When $N\in\mathbb{N}$ is even and large enough, by \eqref{chp3-e-1}, \eqref{chp3-e-2}, and the Stirling approximation formula one has 
\begin{align*}
	\vert S_0\vert&\ge\frac{\vert\cA_{W_0}\vert}{2^{\vert E\setminus W_0\vert}}\ge\frac{d\cdot{N\choose N/2}\cdot{\tau N\choose\lfloor\delta N\rfloor}}{{N\choose\lfloor\delta N\rfloor}\cdot2^{\vert E\setminus W_0\vert}}\ge\frac{d\cdot (\tau N)!(N-\delta N)!}{(N/2)!(N/2)!(\tau N-\delta N +1)!\cdot  2^{\vert E\setminus W_0\vert}}\\
	&\ge\frac{d\cdot (\tau N/e)^{\tau N}\cdot \big((N-\delta N)/e\big)^{N-\delta N}}{N\cdot (N/2e)^N\cdot \big((\tau N-\delta N+1)/e\big)^{\tau N-\delta N+1}\cdot 2^{N-\delta N+1}}\\
\end{align*}
Since $(M+1)^{M+1}\le 4\cdot M^{M+1}$ for all $M>0$ sufficiently large, $\vert S_0\vert$ is bounded from below by
\begin{align*}
	 &\frac{d\cdot e\cdot 2^N\cdot (\tau N)^{\tau N}(N-\delta N)^{N-\delta N}}{N\cdot N^N \cdot 4(\tau N-\delta N)^{(\tau N-\delta N+1)}\cdot 2^{N-\delta N+1}}\\
	&=\frac{d\cdot e\cdot \tau^{\tau N}(1-\delta)^{(1-\delta)N}}{8 N^2(\tau-\delta)\cdot(\tau-\delta)^{(\tau-\delta)N}\cdot 2^{-\delta N}}\\
	&=\frac{d\cdot e}{8N^2(\tau-\delta)}\exp\big[N\big(-f(\tau)-f(1-\delta)+f(\tau-\delta)+\delta\log2\big)\big],
\end{align*}
 where $f(x)=-x\log x$. By noting that $$\lim_{\delta\to0^+}\frac{-f(\tau)-f(1-\delta)+f(\tau-\delta)+\delta\log2}{\delta}=\log(2\tau)>0$$ we can take $0<\delta<1/4$ sufficiently small such that $\frac{-f(\tau)-f(1-\delta)+f(\tau-\delta)+\delta\log2}{\delta}>\theta_1$ for some $0<\theta_1<\log(2\tau)$, and $0<\theta_2<\theta_1$ with $$\vert S_0\vert\ge\frac{d e}{8(\tau-\delta)N^2}\cdot e^{\theta_1\delta N}\ge2^{\theta_2\cdot\vert W_0\vert}$$ for every $N$ large enough. Then by Lemma \ref{KLind}, we can find some $c>0$ depending on $\theta_2$ only, such that when $N$ is large enough there exists $I\subset W_0$ satisfying $\vert I\vert>c\cdot\vert W_0\vert$ and $S_0\vert _I=[2]^I$. Hence we conclude.
\end{proof}
We also need an elementary lemma.
\begin{lemma}\label{diff}
For any distinct $\mu,\nu\in\cM(X)$ there exist open subsets $A,B$ of $X$ such that $\overline{A}\cap\overline{B}=\emptyset$ and $\mu(A)+\nu(B)>1$.
\end{lemma}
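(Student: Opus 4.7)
The plan is a direct regularity argument that exploits the inequality $\mu\ne\nu$ to produce a quantitative gap, which I will then distribute between the two open sets $A$ and $B$. Since Borel probability measures on a compact metrizable space are determined by their values on closed sets, the hypothesis $\mu\ne\nu$ furnishes a closed set $F\subset X$ with $\mu(F)\ne\nu(F)$. By symmetry (swap the roles of $\mu,\nu$ and, correspondingly, of $A,B$ in the conclusion) I may assume $\eta:=\mu(F)-\nu(F)>0$.

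Next I will enlarge $F$ to an open set $A$ while keeping the $\nu$-measure of its closure close to $\nu(F)$. Fix any compatible metric $d$ on $X$ and set $A_\varepsilon=\{x\in X:d(x,F)<\varepsilon\}$, so that $\overline{A_\varepsilon}\subset\{x\in X:d(x,F)\le\varepsilon\}$. The right-hand sets decrease to $F$ as $\varepsilon\to0^+$, hence continuity of $\nu$ from above allows me to choose $\varepsilon>0$ so small that $A:=A_\varepsilon$ satisfies $\nu(\overline{A})<\nu(F)+\eta/4$. Because $F\subset A$, this yields $\mu(A)-\nu(\overline{A})>\mu(F)-\nu(F)-\eta/4=3\eta/4$. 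Now let $C=X\setminus\overline{A}$, an open set with $\nu(C)=1-\nu(\overline{A})$. By inner regularity there is a compact $K\subset C$ with $\nu(K)>\nu(C)-\eta/4$, and since $K$ is compact and $C$ open, a sufficiently small $\delta$-neighborhood $B$ of $K$ satisfies $K\subset B\subset\overline{B}\subset C$; in particular $\overline{A}\cap\overline{B}=\emptyset$ and $\nu(B)\ge\nu(K)>1-\nu(\overline{A})-\eta/4$.

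Combining the two estimates then gives
\[
\mu(A)+\nu(B)>\mu(A)+1-\nu(\overline{A})-\eta/4\ge 1+\frac{3\eta}{4}-\frac{\eta}{4}=1+\frac{\eta}{2}>1,
\]
which is the desired inequality. I do not anticipate any substantive obstacle: the only mildly delicate step is producing an open neighborhood $A$ of $F$ whose closure still has $\nu$-measure near $\nu(F)$, and the $\varepsilon$-neighborhood construction together with continuity of $\nu$ from above handles this cleanly.
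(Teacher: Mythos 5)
Your proof is correct, and it takes a genuinely different route from the paper's. You work directly with regularity of Borel measures on compact metric spaces: you extract a closed set $F$ on which the measures disagree, thicken $F$ to an open $A$ whose closure still has small $\nu$-measure using metric $\varepsilon$-neighborhoods and continuity from above, and then carve out a compact $K\subset X\setminus\overline{A}$ of nearly full $\nu|_{X\setminus\overline{A}}$-mass by inner regularity and thicken it slightly to get $B$. The paper instead pushes $\mu$ and $\nu$ forward to $\mathbb{R}$ via a continuous function $f$ separating them, uses the at-most-countable-atoms fact to pick a threshold $r$ with $\mu(f^{-1}(r))=\nu(f^{-1}(r))=0$ and $\mu(f^{-1}((r,\infty)))>\nu(f^{-1}((r,\infty)))$, and takes $A,B$ to be the preimages of half-lines $(r+\delta,\infty)$ and $(-\infty,r-\delta)$. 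The paper's argument is slicker in that disjointness of closures is automatic (the half-lines are metrically separated in $\mathbb{R}$) and it avoids the two-step thickening; your argument is more elementary and self-contained, relying only on inner and outer regularity rather than on pushforward distribution functions and their continuity properties. Both are short and legitimate; neither has a gap.
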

\begin{proof}
Since $\nu\neq\mu$, there exists a $f\in C(X)$ such that $\mu(f)\neq\nu(f)$. Note that $\nu,\mu$ are probability measures and $f^{-1}(r)\cap f^{-1}(s)=\emptyset$ for every distinct $r,s\in\Rb$. Then there exists $r\in\Rb$ such that $\mu(f^{-1}(r))=\nu(f^{-1}(r))=0$ and $\mu(f^{-1}(a,\infty))\neq \nu(f^{-1}(a,\infty))$. Without loss of generality, we can assume that $\mu(f^{-1}(a,\infty))> \nu(f^{-1}(a,\infty))$. It follows that
$$\mu(f^{-1}((r,+\infty)))+\nu(f^{-1}((-\infty,r)))>\nu(f^{-1}(\mathbb{R}))=1.$$
Since $\mu(f^{-1}r)=\nu(f^{-1}(r))=0$ and $f\in C(X)$, there exists $\delta>0$ small enough such that $A=f^{-1}((r+\delta,+\infty))$ and $B=f^{-1}((-\infty,r-\delta))$ are as required.
\end{proof}
We are now ready to prove:
\begin{proposition}
Let $\pi:(X,G)\to(Y,G)$ be a factor map between two $G$-systems and $\widetilde{\pi}:(\cM(X),G)\to(\cM(Y),G)$ the induced factor map. If $\h_\topol(\pi,G)=0$, then $\h_\topol(\widetilde{\pi},G)=0$.
\end{proposition}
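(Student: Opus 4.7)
The plan is to argue by contrapositive: assume $\h_\topol(\widetilde{\pi},G) > 0$ and deduce $\h_\topol(\pi,G) > 0$. By Theorem~\ref{thm:positiveentropy} applied to $\widetilde{\pi}$, there is an $\IE$-pair $(\mu_1,\mu_2) \in \IE(\widetilde{\pi},G)$ with $\mu_1 \neq \mu_2$; Lemma~\ref{diff} then provides open sets $A, B \subset X$ with $\overline{A} \cap \overline{B} = \emptyset$ and $s := \mu_1(A) + \mu_2(B) - 1 > 0$. For $\delta > 0$ small, the weak$^\ast$-open sets $\mathbf{U}_1 := \{\nu \in \cM(X) : \nu(A) > \mu_1(A) - \delta\}$ and $\mathbf{U}_2 := \{\nu \in \cM(X) : \nu(B) > \mu_2(B) - \delta\}$ are disjoint neighborhoods of $\mu_1, \mu_2$ (their intersection would force $\nu(A) + \nu(B) > 1+s-2\delta > 1$, contradicting $A \cap B = \emptyset$). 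The strategy is to transfer positive independence density of $(\mathbf{U}_1,\mathbf{U}_2)$ with respect to $\widetilde{\pi}$ to positive independence density of $(\overline{A},\overline{B})$ with respect to $\pi$; Theorem~\ref{thm:positiveentropy} will then give $\h_\topol(\pi,G) > 0$.

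Fix $H \in \cF(G)$ sufficiently large and pick an independence set $I \subset H$ for $(\mathbf{U}_1,\mathbf{U}_2)$ with respect to $\widetilde{\pi}$, with $|I| > c_0|H|$ and $|I|$ even. Taking $J = I$ in the definition, there exist $\nu_I \in \cM(Y)$ and, for each $\sigma \in [2]^I$, a measure $\mu_\sigma \in \widetilde{\pi}^{-1}(\nu_I) \cap \bigcap_{h \in I} h^{-1}\mathbf{U}_{\sigma(h)}$, so all $\mu_\sigma$ share the common pushforward $\nu_I$ and $\mu_\sigma(h^{-1}A_{\sigma(h)}) > \mu_{\sigma(h)}(A_{\sigma(h)}) - \delta$ for every $h \in I$ (with $A_1 = A, A_2 = B$). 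Using Lemma~\ref{R_pi-dense} in a simultaneous multi-measure form, approximate $\{\mu_\sigma\}_{\sigma \in [2]^I}$ in weak$^\ast$ by empirical measures $\mu_\sigma^n = \frac{1}{n}\sum_{j=1}^n \delta_{x_{\sigma,j}}$ sharing a single pushforward $\nu^n = \frac{1}{n}\sum_{j=1}^n \delta_{y_j}$, arranged (after relabeling) so that $\pi(x_{\sigma,j}) = y_j$ for all $\sigma, j$. Restricting to balanced $\sigma \in S_I$ (where $|I_1^\sigma| = |I_2^\sigma| = |I|/2$) and using $\mu_1(A)+\mu_2(B) = 1+s$, for $n$ large and $\delta$ small one obtains
\[
\sum_{h \in I} \mu_\sigma^n(h^{-1} A_{\sigma(h)}) > \tau'' |I|
\]
for some fixed $\tau'' > \tfrac{1}{2}$ independent of $\sigma \in S_I$.

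The combinatorial heart then proceeds as follows. Rewriting $n\mu_\sigma^n(h^{-1}A_{\sigma(h)}) = |\{j : hx_{\sigma,j} \in A_{\sigma(h)}\}|$, summing over $\sigma \in S_I$, and swapping summation orders, pigeonhole over $j \in [n]$ produces some $j^* \in [n]$ with $\frac{1}{|S_I|}\sum_{\sigma \in S_I} |E_\sigma| > \tau'' |I|$, where $E_\sigma := \{h \in I : h x_{\sigma, j^*} \in A_{\sigma(h)}\}$. Markov's inequality then extracts $S^* \subseteq S_I$ with $|S^*| > d \cdot |S_I|$ and $|E_\sigma| > (\tfrac{1}{2} + \epsilon)|I|$ for every $\sigma \in S^*$, for some $d, \epsilon > 0$ depending only on $s$. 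Lemma~\ref{ind} now delivers $c > 0$ and $I^* \subset I$ with $|I^*| > c|I| > cc_0|H|$ such that every $\omega \in [2]^{I^*}$ equals $\sigma|_{I^*}$ for some $\sigma \in S^*$ with $I^* \subset E_\sigma$. Setting $y^* := y_{j^*}$, the point $x_{\sigma,j^*} \in \pi^{-1}(y^*)$ lies in $\bigcap_{h \in I^*} h^{-1}\overline{A_{\omega(h)}}$, so the single fiber $\pi^{-1}(y^*)$ witnesses that $I^*$ is an independence set for $(\overline{A},\overline{B})$ with respect to $\pi$ (for $J \subsetneq I^*$, extend $[2]^J$-patterns to $[2]^{I^*}$ and reuse $y^*$). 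Hence $(\overline{A},\overline{B})$ has positive independence density with respect to $\pi$, and Theorem~\ref{thm:positiveentropy} gives $\h_\topol(\pi,G) > 0$.

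The main obstacle is the simultaneous approximation step: Lemma~\ref{R_pi-dense} as stated handles only pairs, but the argument needs all $2^{|I|}$ measures $\{\mu_\sigma\}$ to be approximated weak$^\ast$-simultaneously by empirical measures sharing one common pushforward $\nu^n$ on $Y$. I would derive this multi-measure strengthening by disintegrating each $\mu_\sigma = \int \mu_{\sigma,y}\,d\nu_I(y)$ over the common base $\nu_I$, sampling $y_1,\dots,y_n$ i.i.d. from $\nu_I$, and then choosing $x_{\sigma,j}$ by sampling independently from the fiber measure $\mu_{\sigma,y_j}$ inside $\pi^{-1}(y_j)$; a standard concentration argument combined with separability of $C(X)$ shows that, almost surely, all $\mu_\sigma^n \to \mu_\sigma$ simultaneously in weak$^\ast$, while $\pi(x_{\sigma,j}) = y_j$ holds identically. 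Once this multi-approximation is in place, everything else — the averaging over $S_I$, Markov's inequality to isolate $S^*$, and the application of Lemma~\ref{ind} — is essentially tight bookkeeping.
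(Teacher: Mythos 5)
Your proposal is essentially the paper's own proof: both run through Theorem~\ref{thm:positiveentropy} and Lemma~\ref{diff} to produce a separated pair of open sets $A_1,A_2$ in $X$ and disjoint neighbourhoods $\cU_1,\cU_2$ in $\cM(X)$, pass to an independence set for $(\cU_1,\cU_2)$ with respect to $\widetilde{\pi}$, approximate the witnessing measures $\lambda_\sigma$ by empirical measures over a common finite fibre via Lemma~\ref{R_pi-dense}, restrict to balanced $\sigma\in S_E$, pigeonhole over the sample index to find a single point $x_{i_E}^\sigma$ (your $x_{\sigma,j^*}$) hitting the correct $A_{\sigma(h)}$ for a majority of $h\in E$, and then invoke Lemma~\ref{ind} (with $\tau>1/2$) to extract a positive-density independence set for $(A_1,A_2)$ with respect to $\pi$. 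The paper packages the ``Markov'' step slightly differently (defining $\Psi_\sigma$ and the super-level set $H_\sigma$, then transferring to $W_\sigma$ after discretizing), whereas you discretize first and then average and apply Markov on the empirical side, and you use the closures $\overline{A},\overline{B}$ rather than $A_1,A_2$ in the final independence set; these are cosmetic reorderings of the same computation. The one genuinely useful observation you add is the explicit flag that Lemma~\ref{R_pi-dense}, as stated, concerns only pairs, while the argument needs all $2^{|E|}$ measures $\lambda_\sigma$ to be approximated simultaneously by empirical measures pushing forward to one common empirical measure on $Y$; the paper applies the lemma in this stronger form without comment, and your disintegration-plus-i.i.d.-sampling argument is a correct way to justify that strengthening.
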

\begin{proof}
Assume that $\h_\topol(\widetilde{\pi},G)>0$. We shall show $\h_\topol(\pi,G)>0$. By Theorem \ref{thm:positiveentropy}, $\IE(\widetilde{\pi},G)\setminus\Delta_2(\cM(X))\ne\emptyset$. Take $(\mu_1,\mu_2)\in\IE(\widetilde{\pi},G)\setminus\Delta_2(\cM(X))$. By Lemma \ref{diff}, there exist open subsets $A_1,A_2$ of $X$ such that $\overline{A_1}\cap\overline{A_2}=\emptyset$ and $\mu_1(A_1)+\mu_2(A_2)>1$. Again by Theorem \ref{thm:positiveentropy}, in order to show $\h_\topol(\pi,G)>0$ it suffices to show that $(A_1,A_2)$ has an independence set of positive density with respect to $\pi$.

Take $\delta>0$ sufficiently small such that $a_1+a_2>1$, where $a_1=\mu_1(A_1)-\delta>0$, $a_2=\mu_2(A_2)-\delta>0$. For $i\in[2]$ put $\cU_i=\{\theta\in\cM(X):\theta(A_i)>a_i\}$. Clearly, $\cU_1$ and $\cU_2$ are disjoint open subsets of $\cM(X)$, which satisfy $\mu_1\in\cU_1$ and $\mu_2\in\cU_2$ \cite[Chapter II, Theorem 6.1]{Par67}. Note that $\theta\in\overline{\cU_i}$ implies $\theta(\overline{A_i})\ge a_i$ for $i\in[2]$. Hence, $\overline{\cU_1}\cap\overline{\cU_2}=\emptyset$. Since $(\mu_1,\mu_2)\in\IE(\widetilde{\pi},G)$, there is some $c_0>0$ satisfying that for every $H\in\mathcal{F}(G)$ there exists $E\subset H$ with $\vert E\vert>c_0\cdot\vert H\vert$ such that $E$ is an independence set for $(\cU_1,\cU_2)$ with respect to $\widetilde{\pi}$.

Fix temporarily an independence set $E$ of $(\cU_1,\cU_2)$ with respect to $\widetilde{\pi}$, as mentioned above. Then there exists $\nu\in\cM(Y)$ such that for each $\sigma\in[2]^E$ there is $\lambda_\sigma\in\cM(X)$ satisfying
\begin{equation}\label{entropy-e-0}
\lambda_\sigma\in\widetilde{\pi}^{-1}(\nu)\cap(\cap_{h\in E}h^{-1}\cU_{\sigma(h)}).
\end{equation}
This means $(\lambda_{\sigma})_{\sigma\in[2]^E}\in R^{2^E}_{\widetilde{\pi}}$ and $\prod_{\sigma\in[2]^E}(\cap_{h\in E}h^{-1}\cU_{\sigma(h)})$ is a neighborhood of it. Then
by Lemma \ref{R_pi-dense}, we can assume that $(\lambda_{\sigma})_{\sigma\in[2]^E}\in R^{2^E}_{\widetilde{\pi}_L}$ for some $L\in\Nb$. That is, we can find $y_i\in Y$ and $x_i^\sigma\in X$ with $\pi(x_i^\sigma)=y_i$ for all $i\in[L]$ and $\sigma\in[2]^E$, such that
\begin{equation}\label{entropy-e-1}
\nu=\frac1L\sum_{i\in[L]}\delta_{y_i},\quad\lambda_\sigma=\frac1L\sum_{i\in[L]}\delta_{x_i^\sigma}.
\end{equation}
Without loss of generality, we assume that $\vert E\vert$ is even. For $E$, $i\in[2]$, and $\sigma\in[2]^E$, recall the notations $E_i^\sigma$ and $S_E$ as defined previously.

Next define a family of maps $\{\Psi_\sigma:X\to[0,2]\}_{\sigma\in S_E}$ as follows: $$\Psi_\sigma(x)=\frac{1}{\vert E_1^\sigma\vert}\sum_{h\in E_1^\sigma}1_{A_1}(hx)+\frac{1}{\vert E_2^\sigma\vert}\sum_{h\in E_2^\sigma}1_{A_2}(hx),\quad(x\in X).$$ It follows from \eqref{entropy-e-0} that $$\int_X\Psi_\sigma d\lambda_\sigma>a_1+a_2,\quad\forall\sigma\in S_E.$$ Put $b=\frac{a_1+a_2+1}{2}>1$, $d=\frac{a_1+a_2-1}{3-a_1-a_2}>0$, and $H_\sigma=\{x\in X:\Psi_\sigma(x)>b\}$ for $\sigma\in S_E$. It follows that $$a_1+a_2<\int_X\Psi_\sigma d\lambda_\sigma\le2\lambda_\sigma(H_\sigma)+b(1-\lambda_\sigma(H_\sigma)).$$ Thus, $\lambda_\sigma(H_\sigma)>d$. By \eqref{entropy-e-1}, $\vert W_\sigma\vert>dL$, where $W_\sigma=\{i\in[L]:\Psi_\sigma(x_i^\sigma)>b\}$.

For each $k\in[L]$ let $S_k=\{\sigma\in S_E:k\in W_\sigma\}$. Then $\sum_{k\in[L]}\vert S_k\vert=\sum_{\sigma\in S_E}\vert W_\sigma\vert>dL\cdot\vert S_E\vert$. Hence there exists $i_E\in[L]$ with $\vert S_{i_E}\vert>d\cdot\vert S_E\vert$. Put $\tau=b/2>1/2$ and, for every $\sigma\in S_{i_E}$, $E_\sigma=\{h\in E_1^\sigma:hx_{i_E}^\sigma\in A_1\}\cup\{h\in E_2^\sigma:hx_{i_E}^\sigma\in A_2\}$. Since $\vert E_1^\sigma\vert=\vert E_2^\sigma\vert$, one has $\vert E_\sigma\vert>\tau\cdot\vert E\vert$ for all $\sigma\in S_{i_E}$.

By Lemma \ref{ind} there is $c>0$ such that for $E$ with $\vert E\vert$ being sufficiently large, there is $I\subset E$ with $\vert I\vert>c\cdot\vert E\vert$ such that for every $\omega\in[2]^I$ there is $\sigma\in S_{i_E}$ such that $\sigma\vert_I=\omega$ and $I\subset E_\sigma$. More precisely, if $h\in I$ and $j\in[2]$ satisfy $\omega(h)=j$, then $h\in E_j^\sigma\cap E_\sigma$ and therefore $hx_{i_E}^\sigma\in A_j$. From $\pi(x_{i_E}^\sigma)=y_{i_E}$ we deduce that $x_{i_E}^\sigma\in\pi^{-1}(y_{i_E})\cap\cap_{h\in I}h^{-1}A_{\omega(h)}$. This implies that $I$ is an independence set for $(A_1,A_2)$ with respect to $\pi$ and $\vert I\vert>cc_0\cdot\vert H\vert$. By Theorem \ref{thm:positiveentropy} we conclude.
\end{proof}

\section{Proof of $\h_\topol(\pi,G)>0\implies\mdim(\widetilde{\pi},G)=+\infty$}
Let $G$ be a countably infinite amenable group, $\pi:(X,G)\to(Y,G)$ a factor map between two $G$-systems, and $\widetilde{\pi}:(\cM(X),G)\to(\cM(Y),G)$ the induced factor map. Suppose that $\h_\topol(\pi,G)>0$. We will show $\mdim(\widetilde{\pi},G)=+\infty$.

In then following, for a compact metric space $(Z,\rho)$, point $x\in Z$ and subset $V\subset Z$, let $\rho(x,V)$ denote $\inf\{\rho(x,y)\colon y\in V\}$. For two subsets $V_1,V_2\subset Z$, let $\rho(V_1,V_2)$ denote $\inf\{\rho(x,y)\colon x\in V_1,y\in V_2\}$.

By Theorem \ref{thm:positiveentropy}, there exists $(x_1,x_2)\in \IE(\pi,G)\setminus\Delta_2(X)$. Then there exist closed neighborhoods $V_1$ of $x_1$ and $V_2$ of $x_2$, respectively, such $V_1\cap V_2=\emptyset$ and $(V_1,V_2)$ has positive independence density with respect to $\pi$, i.e., there is some $r>0$ such that for each $E\in\mathcal{F}(G)$ there exists $J\subset E$ satisfying that $\vert J\vert>r\cdot\vert E\vert$ and that $J$ is an independence set for $(V_1,V_2)$ with respect to $\pi$.

Take $0<\eta<1/8$. Fix $H\in\mathbb{N}$ temporarily. Take $M\in\mathbb{N}$ with $r/4<H/M<r/2$. Take pairwise distinct elements $h_1,\dots,h_M\in G$. Take $\delta>0$ such that for any $z,z^\prime\in X$ one has
\begin{equation}\label{e-distance1}
d(z,z^\prime)\le\delta\implies d(h_jz,h_jz^\prime)<d(V_1,V_2)/2,\quad\forall j\in[M].
\end{equation}
Take $0<\varepsilon<\frac{\delta^2}{2\cdot\diam(X)\cdot2^H}$ and a F{\o}lner sequence $\{G_n\}_{n\in\mathbb{N}}$ of $G$. Then for every $n\in\mathbb{N}$ sufficiently large, $\vert G_n\cap(\cap_{j\in[M]}h_j^{-1}G_n)\vert>(1-\eta)\cdot\vert G_n\vert$. Put $T_n=\lfloor\frac{(1-\eta)\cdot\vert G_n\vert}{M^2}\rfloor$.
\subsection*{Claim 0}
For every sufficiently large $n\in\mathbb{N}$, there exists $\{g_1,\dots,g_{T_n}\}\subset G_n$ such that $E^\prime_{n,1},\dots,E^\prime_{n,T_n}$ are pairwise disjoint subsets of $G_n$, where $E^\prime_{n,\ell}=\{h_1g_\ell,\dots,h_Mg_\ell\}$ for $\ell\in[T_n]$.
\begin{proof}
Fix $n\in\mathbb{N}$ large enough. Let $G_n^0=\{g\in G_n:h_jg\in G_n,\forall j\in[M]\}$. Then $\vert G_n^0\vert>(1-\eta)\cdot\vert G_n\vert$. Take $g_1\in G_n^0$. Put $E^\prime_{n,1}=\{h_1g_1,\dots,h_Mg_1\}\subset G_n$. For each $2\le\ell\le T_n$ set $\widetilde{E}_{n,\ell-1}=\{h_j^{-1}h_{j^\prime}g_{\ell-1}:j,j^\prime\in[M]\}$, and take $g_\ell\in G_n^0\setminus\cup_{p=1}^{\ell-1}\widetilde{E}_{n,p}$. Let $E^\prime_{n,\ell}=\{h_1g_\ell,\dots,h_Mg_\ell\}$. Then $g_\ell$ and $E^\prime_{n,\ell}$ for $\ell\in[T_n]$ are as desired.
\end{proof}
Now for every sufficiently large $n\in\mathbb{N}$ take $E_n\subset\cup_{\ell=1}^{T_n}E^\prime_{n,\ell}$ with $\vert E_n\vert>r\cdot\vert\cup_{\ell=1}^{T_n}E^\prime_{n,\ell}\vert$ such that $E_n$ is an independence set for $(V_1,V_2)$ with respect to $\pi$, let $\cQ_n=\{\ell\in[T_n]:\vert E_n\cap E^\prime_{n,\ell}\vert\ge H\}$ and put $m_n=\vert\cQ_n\vert$. Then $$rT_nM=r\cdot\vert\cup_{\ell=1}^{T_n}E^\prime_{n,\ell}\vert<\vert E_n\vert=\sum_{\ell\in[T_n]}\vert E_n\cap E^\prime_{n,\ell}\vert\le m_n\cdot M+(T_n-m_n)\cdot H$$ which implies that $$m_n\ge T_n\cdot(rM-H)/(M-H)\ge T_n\cdot(r-H/M)\ge rT_n/2.$$ Without loss of generality assume $\cQ_n=[m_n]$. Take $h_{i,j}\in\{h_\ell:\ell\in[M]\}$ for every $i\in[M_n]$ and $j\in[H]$, such that $h_{i,j}\ne h_{i,j^\prime}$ for any $i\in[m_n]$ and distinct $j,j^\prime\in[H]$, and such that $E_{n,i}\subset E^\prime_{n,i}\cap E_n$, where $E_{n,i}=\{h_{i,1}g_i,\dots,h_{i,H}g_i\}$. Then there exists $y_{H,n}\in Y$ such that for every $\cE=(\cE_1,\dots,\cE_{m_n})\in([2]^H)^{m_n}$ there is some $x_\cE\in\pi^{-1}(y_{H,n})\cap\cap_{i\in[m_n]}\cap_{j\in[H]}(h_{i,j}g_i)^{-1}V_{\cE_i(j)}$. Next define $\Psi:\Delta_{[2]^H}^{m_n}\to\cM(X)$ by $\vec{t}=(t_1,\dots,t_{m_n})\mapsto\sum_{\cE\in([2]^H)^{m_n}}(\prod_{i\in[m_n]}t_i(\cE_i))\delta_{x_\cE}$. Clearly, $\Psi$ is a well-defined continuous injective map. Put $L_n=\Psi(\Delta_{[2]^H}^{m_n})$. Then $L_n\subset\widetilde{\pi}^{-1}(\delta_{y_{H,n}})$.

In order to complete the proof, by Proposition \ref{prop:mdim-widim-ord} it suffices to prove that for any sufficiently large $H\in\mathbb{N}$ there is a finite open cover $\alpha$ of $\cM(X)$ such that for all $n\in\mathbb{N}$ large enough $$\mathcal{D}(\vee_{s\in G_n}s^{-1}\alpha\vert_{\widetilde{\pi}^{-1}(\delta_{y_{H,n}})})\ge\frac{r^3}{4^4H^2}\cdot2^H\cdot\vert G_n\vert.$$ We need show three claims below, for all sufficiently large $n\in\mathbb{N}$. For each face $F$ of $\Delta_{[2]^H}$ and each $i\in[m_n]$ set $S_{F_i}=\cup_{\nu\in\Psi(F_i)}\supp(\nu)$.
\subsection*{Claim 1}
Let $i\in[m_n]$ and $\mu\in L_n$. For any face $F$ of $\Delta_{[2]^H}$ there exist $\mu'\in\Psi(F_i)$ and $\mu''\in\Psi(\overline{F}_i)$ such that $\mu=\mu(S_{F_i})\mu'+\mu(S_{\overline{F}_i})\mu''$.
\begin{proof}
Take $I\subset[2]^H$ such that $F=\{a\in\Delta_{[2]^H}:\sum_{\omega\in I}a(\omega)=1\}$. Suppose $\mu=\Psi(\vec{t})$, $\vec{t}=(t_1,\dots,t_{m_n})\in\Delta_{[2]^H}^{m_n}$. If $\vec{t}\in F_i\cup\overline{F}_i$, then the statement is obvious. Now assume $\vec{t}\notin F_i\cup\overline{F}_i$. Let $b_1=\sum_{\omega\in I}t_i(\omega)\ne0$, $b_2=\sum_{\omega\notin I}t_i(\omega)\ne0$. Then $$\mu(S_{F_i})=\mu(\{x_\cE:\cE=(\cE_1,\dots,\cE_{[m_n]})\in([2]^H)^{m_n},\cE_i\in I\})$$$$=\sum_{\cE\in([2]^H)^{m_n},\cE_i\in I}\prod_{j\in[m_n]}t_j(\cE_j)=b_1,$$ and similarly, $\mu(S_{\overline{F}_i})=b_2$.

Let $\vec{t'}=(t_1,\dots,t_i',\dots,t_{m_n})\in F_i$, $\vec{t''}=(t_1,\dots,t_i'',\dots,t_{m_n})\in\overline{F}_i$, where $$t_i'(\omega)=\begin{cases}\frac{1}{b_1}\cdot t_i(\omega),&\omega\in I\\0,&\omega\notin I\end{cases},\quad\quad t_i''(\omega)=\begin{cases}\frac{1}{b_2}\cdot t_i(\omega),&\omega\notin I\\0,&\omega\in I\end{cases}.$$ Then $\vec{t}=b_1\vec{t'}+b_2\vec{t''}$. By the definition of $\Psi$, one has $\mu=\Psi(\vec{t})=\mu(S_{F_i})\Psi(\vec{t'})+\mu(S_{\overline{F}_i})\Psi(\vec{t''})$.
\end{proof}
\subsection*{Claim 2}
For any $i\in[m_n]$, any $\mu\in L_n$, and any face $F$ of $\Delta_{[2]^H}$$$\delta\cdot\mu(S_{\overline{F}_i})\le W_{G_n}(\mu,\Psi(F_i))\le\diam(X)\cdot\mu(S_{\overline{F}_i}).$$
\begin{proof}
Take $I\subset[2]^H$ such that $F=\{a\in\Delta_{[2]^H}:\sum_{\omega\in I}a(\omega)=1\}$. For any $x\in S_{F_i}$ and $x'\in S_{\overline{F}_i}$ there exist $\cE,\cE'\in([2]^H)^{m_n}$ such that $\cE_i\in I$, $\cE'_i\notin I$, $x=x_\cE$ and $x'=x_{\cE'}$. Thus, there is some $j\in[H]$ such that $\cE_i(j)\ne\cE'_i(j)$ which implies that $h_{i,j}g_ix_\cE\in V_p$ and $h_{i,j}g_ix_{\cE'}\in V_{p'}$ for $p\ne p'\in[2]$. Hence $d(h_{i,j}g_ix,h_{i,j}g_ix')\ge d(V_1,V_2)$. By \eqref{e-distance1}, $d(g_ix_\cE,g_ix_{\cE'})>\delta$. It follows that
\begin{equation}\label{WMn1}
d(g_iS_{F_i},g_iS_{\overline{F}_i})\ge\delta.
\end{equation}
Thus, for all $\nu\in\Psi(F_i)$$$W_{G_n}(\mu,\nu)=\max_{s\in G_n}W(s\mu,s\nu)\ge W(g_i\mu,g_i\nu)$$$$\ge\left\vert\int_Xd(x,g_iS_{F_i})d\mu(g_i^{-1}x)-\int_Xd(x,g_iS_{F_i})d\nu(g_i^{-1}x)\right\vert$$$$=\left\vert\int_Xd(g_ix,g_iS_{F_i})d\mu(x)-\int_Xd(g_ix,g_iS_{F_i})d\nu(x)\right\vert$$$$\ge\mu(S_{\overline{F}_i})\cdot d(g_iS_{F_i},g_iS_{\overline{F}_i})\ge\delta\cdot\mu(S_{\overline{F}_i}),$$ and thus, $W_{G_n}(\mu,\Psi(F_i))\ge\delta\cdot\mu(S_{\overline{F}_i})$. This proves the former inequality. To see the latter, applying Claim 1, $\mu=\mu(S_{F_i})\mu'+\mu(S_{\overline{F}_i})\mu''$ for $\mu'\in\Psi(F_i)$ and $\mu''\in\Psi(\overline{F}_i)$. By \eqref{diam-W} $$W_{G_n}(\mu,\Psi(F_i))\le W_{G_n}(\mu,\mu')=\mu(S_{\overline{F}_i})\cdot W_{G_n}(\mu',\mu'')\le\mu(S_{\overline{F}_i})\cdot\diam(X).$$ This shows the latter inequality.
\end{proof}
\subsection*{Claim 3}
If $\beta$ is a finite open cover of $L_n$ such that $\sup_{B\in\beta}\diam(B,W_{G_n})<\varepsilon$, then $\ord(\beta)\ge\frac{r^3}{4^4H^2}\cdot2^H\cdot\vert G_n\vert$.
\begin{proof}
Since $\Psi:\Delta_{[2]^H}^{m_n}\to L_n$ is a homeomorphism, it is equivalent to showing that $\ord(\Psi^{-1}(\beta))\ge\frac{r^3}{4^4H^2}\cdot2^H\cdot\vert G_n\vert$.

First we shall prove that $\Psi^{-1}(\beta)$ satisfies the condition as stated in Lemma \ref{lem:lebesgue}, i.e., for $i\in[m_n]$, $k\in\mathbb{N}$, $U_1,\dots,U_k\in\beta$ with $\cap_{j\in[k]}U_j\ne\emptyset$, and $(2^H-1)$-faces $F^1,\dots,F^k$ of $\Delta_{[2]^H}$ with $\Psi^{-1}(U_j)\cap(F^j)_i\ne\emptyset$ for all $j\in[k]$, we need show
\begin{equation}\label{e-4-lemma}
\cap_{j\in[k]}\Psi^{-1}(U_j)\cap(\overline{\cap_{j\in[k]}F^j})_i=\emptyset.
\end{equation}
If not, $\cap_{j\in[k]}U_j\cap\Psi((\overline{\cap_{j\in[k]}F^j})_i)\neq\emptyset$. We take $\mu\in \cap_{j\in[k]}U_j\cap\Psi((\overline{\cap_{j\in[k]}F^j})_i)$.

Since $\mu\in\cap_{j\in[k]}U_j$, note that $\sup_{B\in\beta}\diam(B,W_{G_n})<\varepsilon$ and $\Psi^{-1}(U_j)\cap(F^j)_i\ne\emptyset$ for all $j\in[k]$, by Claim 2 one has $$\varepsilon>W_{G_n}(\mu,\Psi((F^j)_i))\ge\delta\cdot\mu(S_{(\overline{F^j})_i}),\quad\forall j\in[k],$$ and it follows that $$W_{G_n}(\mu,\Psi((\cap_{j\in[k]}F^j)_i))\le\diam(X)\cdot\mu(S_{(\overline{\cap_{j\in[k]}F^j})_i})$$$$\le\diam(X)\cdot\sum_{F^j\;\text{distinct}}\mu(S_{(\overline{F^j})_i})\le\diam(X)\cdot2^H\cdot\frac\varepsilon\delta<\frac\delta2.$$

On the other hand, since $\mu\in\Psi((\overline{\cap_{j\in[k]}F^j})_i)$, for every $\theta\in \Psi((\cap_{j\in[k]}F^j)_i)$, by \eqref{WMn1} one has
$$W_{G_n}(\theta,\mu)\ge W(g_i\theta,g_i\mu)\ge\left\vert\int d(x,g_iS_{(\cap_{j\in[k]}F^j)_i})d(g_i\theta)-\int d(x,g_iS_{(\cap_{j\in[k]}F^j)_i})d(g_i\mu)\right\vert$$$$\ge d(g_iS_{(\cap_{j\in[k]}F^j)_i},g_iS_{(\overline{\cap_{j\in[k]}F^j})_i})\ge\delta.$$ 
And hence $$W_{G_n}(\mu,\Psi((\cap_{j\in[k]}F^j)_i))\ge\delta.$$ A contraicdtion. Thus \eqref{e-4-lemma} is proven.

Now by Lemma \ref{lem:lebesgue} $$\ord(\Psi^{-1}(\beta))\ge(2^H-1)\cdot m_n\ge(2^H-1)\cdot\frac{rT_n}{2}\ge(2^H-1)\cdot\frac{r}{2}\cdot(\frac{(1-\eta)\cdot\vert G_n\vert}{M^2}-1)$$$$>2^H\cdot\frac{r}{4}\cdot\frac{\vert G_n\vert}{4M^2}\ge2^H\cdot\frac{r}{4}\cdot\frac{\vert G_n\vert}{4(4H/r)^2}=2^H\cdot\frac{r^3}{4^4H^2}\cdot\vert G_n\vert.$$ So the claim is proven.
\end{proof}
Take a finite open cover $\alpha$ of $\cM(X)$ with $\sup_{A\in\alpha}\diam(A,W)<\varepsilon$. It follows from Claim 3 that for all sufficiently large $n\in\mathbb{N}$$$\mathcal{D}(\vee_{s\in G_n}s^{-1}\alpha\vert_{\widetilde{\pi}^{-1}(\delta_{y_{H,n}})})\ge\mathcal{D}(\vee_{s\in G_n}s^{-1}\alpha\vert_{L_n})\ge\frac{r^3}{4^4H^2}\cdot2^H\cdot\vert G_n\vert.$$ This ends the proof.

\end{document}